\newtheorem{theorem}{Theorem}[section]
\newtheorem{corollary}[theorem]{Corollary}
\newtheorem{definition}[theorem]{Definition}
\let\olddefinition\definition
\renewcommand{\definition}{\olddefinition\normalfont}
\newtheorem{lemma}[theorem]{Lemma}
\newtheorem{remark}[theorem]{Remark}
\let\oldremark\remark
\renewcommand{\remark}{\oldremark\normalfont}
\numberwithin{equation}{section}
\def\tr{\ensuremath {\textnormal{tr}}}
\def\Q{\ensuremath {{\bf Q}}}
\def\C{\ensuremath {{\bf C}}}
\def\R{\ensuremath {{\bf R}}}
\def\A{\ensuremath {{\bf A}}}
\def\a{\ensuremath {\mathfrak a}}
\def\Z{\ensuremath {{\bf Z}}}
\def\f{\ensuremath {\mathfrak f}}
\def\O{\ensuremath {\mathcal O}}
\def\p{\ensuremath {\mathfrak p}}
\newcommand\be{\begin{equation}}
\newcommand\ee{\end{equation}}
\title[Weil distributions, explicit formulas, and the continuous spectrum]{Weil distributions, explicit formulas, and the continuous spectrum of $SL(2)$}
\author{Tian An Wong}
\date{\today}
\email{wongtianan@math.ubc.ca}
\address{University of British Columbia, Vancouver, BC}
\subjclass[2010]{11M36 \and 11M26 \and 11F72}
\keywords{Explicit formula, Selberg trace formula, Eisenstein matrix, scattering determinant, intertwining operator.}
\begin{document}

\title[Explicit formulas for the spectral side of the trace formula]{
Explicit formulas for the spectral side of the trace formula of SL(2)
}




\date{\today}

\maketitle

\begin{abstract}
The continuous spectrum to the spectral side of the Arthur-Selberg trace formula is described in terms of intertwining operators, whose normalising factors involve quotients of $L$-functions. In this paper, we derive two expressions in the case of $SL(2)$ over a number field in terms of the Riemann-Weil explicit formula: as a sum over zeroes of the associated $L$-functions, and as a sum of adelic distributions on Weil groups. As an application, we obtain an expression for a lower bound for the sums over zeroes with respect to the truncation parameter for Eisenstein series.

\end{abstract}

\section{Introduction}

The conjectural interpretation of the Selberg trace formula as an explicit formula for zeroes of $L$-functions has been pursued by many authors, for example \cite{Gol,Den2,Con,Mey}. Indeed, the resemblance of the two was already pointed out in Selberg's original paper \cite{S}. On the other hand, it is also well known that the contribution of the continuous spectrum of $G$ to the spectral side of the Selberg trace formula for $G$ is given by an operator that is normalised by a quotient of $L$-functions. Different authors refer to the operator as the Eisenstein matrix, scattering matrix, or intertwining operator (e.g., \cite{Rez,Mas,JL}). As the logarithmic derivative of this operator enters into the spectral side of the trace formula by the Maa\ss-Selberg relation, there is a natural connection here to the zeroes of the given $L$-functions.

The goal of this paper is to write down the explicit formulas that are expected to arise from the latter perspective for $G=SL(2)$, over a number field. In contrast to previous works, we adopt the representation theoretic approach, which allows us to work over number fields, and the scattering matrix is more easily dealt with as the intertwining operator of the principal series representation of $SL(2)$. As an application, we shall obtain an expression for a lower bound for the sum over zeroes of Hecke $L$-functions. The techniques presented here readily generalise to the continuous spectrum of a general reductive group, for the normalised intertwining operator of the principal series representation induced from a maximal parabolic subgroup.

\subsection{Main results} 

Let $F$ be a number field with $r_1$ real and $2r_2$ complex embeddings, and denote by $\O_F$ its ring of integers, $d_F$ its discriminant, $\A=\A_F$ its ring of adeles, and $N=N_{F/Q}$ the norm map. Define the von Mangoldt function for number fields on nonzero integral ideals $\mathfrak a\subset\O_F$,
\[
\Lambda({\mathfrak a})=\begin{cases}\log N(\p) & \text{if }{\mathfrak a}=p^k\text{ for some }k\geq 1\\ 0 & \text{otherwise.}\end{cases}
\]
To fix notation, we introduce the global Hecke $L$-function associated to $\chi$, $L(s,\chi)=\prod_v L_v(s,\chi_v),$ the product taken over all places $v$ of $F$. It satisfies the functional equation
\[
L(s,\chi)=\varepsilon(s,\chi,\psi)L(1-s,\overline{\chi})\label{fe}
\]
where the epsilon factor is defined as $\varepsilon(s,\chi,\psi)=W(\chi)|N_{F/\Q}(\f(\chi))d_F|^{s-\frac12}$, where $W(\chi)$ is the root number, $\f(\chi)$ the conductor of $\chi$, and $\psi$ is a fixed additive character of $F$. We also define the completed $L$-function $\Lambda(s)=|N_{F/\Q}(\f(\chi))d_F|^{s/2}L(s,\chi)$, and denote by $\zeta_F(s,\chi)$ the product over all finite primes of $L_v(s,\chi)$.

The local $L$-factors are given as follows: If $v=\p$ a prime ideal of $F$, then
\[
L_v(s,\chi_v)=(1-\chi_v(\p)N_{F/\Q}(\p)^{-s})^{-1}
\]
if $\chi_v$ is unramified, and 1 if it is ramified. If $v$ is a real place, the field $F_v\simeq \R$ has no non-trivial automorphisms, thus a character $\chi_v$ of $F_v^\times$ can be identified with one of $\R^\times$, hence necessarily of the form  $\chi_v(t)=\text{sgn}(t)^n|t|^w,$ where $n\in\{0,1\}$, $w\in\C$, sgn$(t)$ is the usual sign of $t$, and one has
\[
L_v(s,\chi_v)=\Gamma_\R(s)=\pi^{-\frac{s}{2}}\Gamma(\frac{s+w+n}{2}).
\]
If $v$ is complex, the field $F_v\simeq\C$ has two possible identifications. Choosing one, the character $\chi_v$ will be a character of $\C^\times$, necessarily of the form 
$\chi_v(z)=\text{arg}(z)^n|z|^{2w},
$ where $n\in\Z$, $w\in\C$, arg$(z)$ is the complex argument $z/|z|_\C=z^{1/2}\bar{z}^{-1/2}$, giving
\[
L_v(s,\chi_v)=\Gamma_\C(s)=(2\pi)^{1-s}\Gamma(s+w+\frac{|n|}{2}).
\]
We see that if we identify $F_v$ with its image under complex conjugation, we may replace $\chi_v(z)$ with $\chi_v(\bar{z})$ and thus $n$ with $-n$, and $L_v(s,\chi_v)$ remains well-defined.  Finally, if $v_k$ is an archimedean place, we will write $\Gamma_k(s)$ for $\Gamma_\R(s)$ or $\Gamma_\C(s)$ depending on whether the completion $F_{v_k}$ is real or complex.

Take $\chi$ to be the inducing character for the principal series representation indexed by $\eta$ as in Definition \ref{ps} below. Then the normalising factor of the intertwining operator can be written as
\be
\label{meta}
m(\eta,s)=\frac{L(s,\chi)}{\epsilon(s,\chi,\psi)L(1+s,\chi)},
\ee
where $\psi$ is a fixed additive character of $F\backslash \A_F$. Finally, we shall refer to the \emph{non-trivial zeroes} of $L(s,\chi)$ as the zeroes $\rho$ of $L(s,\chi)$ such that $0\leq\mathrm{Re}(\rho)\leq1$.

Our first result expresses the integral of the logarithmic derivative of the intertwining operator as a sum over zeroes of Hecke $L$-functions. 

\begin{theorem}
\label{mainthm2}
Let $g(x)$ be a smooth compactly supported function on $\R_+^\times$, with $\hat{g}$ its Mellin transform and $g^*(x)=\overline{g(x^{-1})}$. Let $\chi$ be a Hecke character of $F$ of conductor $\f(\chi)$ associated to $\eta$, where at each archimedean place $v_k$, $\chi_{v_k}$ has ramification $w_k=a_k+ib_k$. Then the contribution of the normalising factor for a fixed $\eta$ to the spectral side of the trace formula for $G(\A)$,
\[
-\frac{1}{4\pi}\int_{-i\infty}^{i\infty} m(\eta,s)^{-1}m'(\eta,s)\hat{g}(s)ds 
\]
is equal to
\begin{align}
\label{mainthmformula}
& \frac12\sum_{\rho}(\hat{g}(\rho)+\hat{g}(-\bar\rho))+\frac{1}{2}\sum_{\mathfrak a} \Lambda({\mathfrak a})\big\{\chi({\mathfrak a})g(N({\mathfrak a}))+\bar\chi({\mathfrak a})g^*(N({\mathfrak a}))\big\}\notag\\
&+\log(N\f(\chi)|d_F|) g(1)\notag -\sum_{a_k\leq 0} \int_0^\infty g(x)(\delta_\chi+x^{w-1})dx \notag\\
&-\sum_{k=1}^{r_1+r_2}\frac{1}{2\pi}\int_{-\infty}^\infty \mathrm{Re}\Big[\frac{\Gamma'_k}{\Gamma_k}(\frac{1}{2}+it+w_k)\Big]\hat{g}(\frac{1}{2}+it)dt,
\end{align}
where $\delta_\chi$ is defined to be 1 if $\chi=1$ and 0 otherwise. The sums over $\rho$ aisre taken over the non-trivial zeroes of $L(s,\chi)$, and the sum over $\a$ is taken over integral ideals $\a\subset \O_F$. Moreover, the last term can be expressed as a function of $g(x)$ only, given in (\ref{gammaformula}). 
\end{theorem}

\noindent Note that the function $g$ above is obtained from a test function $f\in C_c^\infty(G(\A))$ by Lemma \ref{testfun}. In particular, we have identified the Mellin transform $\hat{g}(s)$ with the character of the induced representation $\tr\rho(f,\eta,s)$, which varies according to the complex parameter $s$. Also, we note that after the writing of this paper, Shahidi \cite{Sha} has introduced a new normalisation of the intertwining operator, which may simplify many of the terms in (\ref{mainthmformula}). We hope to return to this in a future study.

The form of \eqref{mainthmformula} reflects Weil's first explicit formula \cite{W52}. The expression for the archimedean contributions are obtained using work of Bombieri \cite{B}, so that the resulting expression is given solely in terms of the function $g$, which is useful for applications. Indeed, we shall view \eqref{mainthmformula} as a distribution on $C_c^\infty(\R^\times_+)$, which we shall denote by $D$. 

Our second result expresses $D$ as a sum of distributions, which we will refer to as \emph{Weil-type distributions} for their relation to Weil's second explicit formula \cite{W72}. This is particularly natural in the sense that the trace formula is an identity of distributions.

\begin{theorem}
\label{mainthm3}
The distribution $D(g)$ can be expressed as a sum of Weil-type distributions
\[
g(0)\log|d_F|+\frac12\int_{W_F}g(|w|)\chi(w)|w|^{-1}dw-\frac12\sum_v pv_0\int_{W_v}\hat{g}(|w|)\chi_v(w)\frac{|w|}{|1-w|}dw.
\]
The sum $v$ runs over all places of $F$, $d_F$ is discriminant of $F$, and $W_F, W_v$ are the global and local Weil groups of $F$ respectively. The finite part $pv_0$ is defined in \eqref{pv}. 
\end{theorem}

Having shown a relationship between the explicit formulas and the spectral side of the trace formula, one would hope that the connection will shed light on either of the two. As a first step, we show the following bound for the sums over zeroes, which follows as a straightforward corollary to Theorem \ref{mainthm2}. For simplicity, we shall define $D_1(g)$ by the relation
\[
D(g) = \frac12\sum_{\rho} (\hat{g}(\rho)+\hat{g}(-\bar\rho)) + D_1(g).
\]
Then we obtain a lower bound on the sum over zeroes of $L(s,\chi)$ in terms of \eqref{mainthmformula}. By the proof of Theorem \eqref{mainthm3}, the lower bound can also be given in terms of Weil-type distributions, but we prefer the explicit expression given below.

\begin{corollary}
\label{mainthm4}
Let $g=g_0*g_0^*$, for any $g_0$ in $C_c^\infty(\R_+^\times)$, and fix a central character $\omega$ of $G$. Then for any Hecke character $\chi\neq\omega$,
\be
\label{lowerbdhecke}
\frac12\sum_{\rho} (\hat{g}(\rho)+\hat{g}(-\bar\rho)) \geq - D_1(g) - D_2(g),
\ee
where $D_2(g)$ is the distribution arising from the normalised intertwining operator (\ref{LL3}). For $\zeta(s)$, we have
\begin{align}
\label{lowerbdzeta}
\sum_{\rho}\hat{g}(\rho)\geq 
&\int_{0}^\infty \Big\{g(x)+\frac{1}{4}g^*(x)\Big\}dx -\sum_{n=1}^\infty \Lambda(n)g(n)-\frac{1}{4\pi i}\int^\infty_{-\infty}m(it)\hat{g}(it)\frac{dt}{t}\notag\\
&-\int_1^\infty \Big\{g(x)+g^*(x)-\frac{2}{x}g(1)\Big\}\frac{x\ dx}{2(x^2-1)},
\end{align}
where $\rho$ runs over the non-trivial zeroes of $\zeta(s)$.
\end{corollary}

Note that there is implicitly a parameter $T>\sqrt3/2$ due to the truncation of Eisenstein series, though we do not explicate this here. For example, the right-hand side of (\ref{lowerbdzeta}) is evaluated at the point $T=1$. Allowing the parameter $T$ to vary, one may possibly optimise further the lower bound for different $g$. Also, we note that for applications to analytic number theory, it is useful to have the right-hand side be given solely in terms of $g$, and that the term containing $\hat{g}$ is equal to $\frac14m(0)g(0)+o(1)$ as $T$ tends to infinity (\cite[p.107]{K}). 

\subsection{Relation to other work}

It is well-known since \cite{JS2} that the spectral theory of Eisenstein series, and in particular the Maa\ss-Selberg relation, can be used to prove the nonvanishing of $L$-functions on the line Re$(s)=1$. In \cite{Sar}, Sarnak showed how to make this method effective, and in the spirit of de la Vall\'ee Poussin obtain standard zero-free regions for $\zeta(s)$. This was extended to other $L$-functions, for example \cite{GLa,GLi}. In our method, we are essentially averaging the Maa\ss-Selberg relation in the $t$-aspect, that is, integrating along the imaginary axis. Doing so, we get a hold of all possible zeroes in the critical strip.

The explicit formulas have been used recently to obtain striking results on low-lying zeroes of families of $L$-functions (see for example \cite{ILS,HM,FM,ST} and the references therein). In order to match the predictions of random matrix theory \cite{KS}, the compact support of the test function should be essentially arbitrary. Indeed, this expectation is similar to the desiderata for test functions satisfying Weil's criterion \cite{B,Bu}, which we discuss in Remark \ref{support}. At present, the support of test functions remains severely constrained. In Section \ref{sec5}, we describe a new approach to bounding the sum over zeroes, which, if made effective, might in certain cases allow for functions with larger support.

\subsection{Outline of paper} This paper is organised as follows: In the Section \ref{sec2}, we derive the explicit formula for Hecke $L$-functions following Bombieri \cite{B} for $\zeta(s)$. We also recall the adelic form of the explicit formula and the Weil criterion for Hecke $L$-functions in this context.  In Section \ref{sec3}, we recall the continuous contribution to the spectral side of the trace formula for $G$, following Langlands and Labesse \cite{LL}.  In Section \ref{sec4}, we relate the explicit formulas to the spectral side of the trace formula, proving Theorem \ref{mainthm2} and \ref{mainthm3} above. While the proof of Theorem \ref{mainthm3} essentially follows \cite{W72}, some care is needed because our normalisation differs by $\frac12$ at times because we take the representation theoretic point of view, where the centre of symmetry is the unitary axis rather than the line Re$(s)=\frac12$. Finally, in Section \ref{sec5} we discuss the application to lower bounds, giving Corollary \ref{mainthm4} as an example. 



\section{The Weil explicit formula}
\label{sec2}

In this section, we derive a version of Weil's explicit formula for Hecke $L$-functions in the sense of \cite{W52} adapted to the normalisation used in the trace formula, and derive the archimedean contributions in Theorem \ref{mainthm2}. We also state the second form as in \cite{W72} with reference to Theorem \ref{mainthm3}, and state Weil's criterion for the associated $L$-functions.

\subsection{First form of the explicit formula}

To state Weil's first explicit formula, fix the following notation: given $f(x)$ a complex-valued function in $C_c^\infty(\R_+^\times)$, define in this section only $f^*(x)=\frac{1}{x}f(\frac{1}{x})$ and the Mellin transform $\hat{f}(s)=\int_0^\infty f(x)x^{s-1} dx$. We will say that $f$ is even if $f=f^*$ and odd if $f=-f^*$.

We now state a version of Weil's explicit formula for $L(s,\chi)$. The shape of the distribution arising from the gamma factor is expressed differently than in \cite{W52}. In particular, we generalise the expression given by Bombieri \cite[p.186]{B} for $\zeta(s)$ to Hecke $L$-functions.

\begin{theorem}
\label{weilexpthm}
Let $g\in C_c^\infty(\R_+^\times)$, and $\chi$ a Hecke character of a number field $F$. Then we have
\begin{align}
\label{weilexp}
\sum_{\rho} \hat{g}(\rho)&=\delta_\chi\int_0^\infty (g(x)+g^*(x))dx-\sum_{\mathfrak a}\Lambda({\mathfrak a})\chi({\mathfrak a})(g(N({\mathfrak a}))+g^*(N({\mathfrak a})))\\
&-\log (|d_F|N\f(\chi))g(1)-\sum_{k=1}^{r_1+r_2}\frac{1}{2\pi}\int _{-\infty}^\infty 2\mathrm{Re}\Big[\frac{\Gamma'_k}{\Gamma_k}(\frac12+it)\Big]\hat{g}(\frac12+it)dt,\notag
\end{align}
where the sum $\rho$ is taken over non-trivial zeroes of $L(s,\chi)$. Moreover, the last sum can be expressed in terms of $g$, 
\begin{align}
\label{gammaformula}
\sum_{k=1}^{r_1+r_2}&\int_1^\infty \big\{g(x)+g^*(x)-\frac{M}{x^{M-a_k+ib_k}}g(1)\big\}\frac{x^{M-1-a_k+ib_k}}{x^M-1}dx\\
&+\Big(\gamma+(\frac{2}{M})\log( \frac{2\pi}{M})\Big)g(1),\notag
\end{align}
where $M=2$ if $F_{v_k}\simeq \R$ and $1$ if $F_{v_k}\simeq\C$.
\end{theorem}

\begin{proof}
We only discuss the archimedean contributions. The rest follows from standard derivations of the explicit formulas (e.g., \cite[Theorem 29]{Ing} and \cite{La}). Consider
\[
\sum_{k=1}^{r_1+r_2}\frac{1}{2\pi i }\int_{(c)}\frac{\Gamma'_k}{\Gamma_k}(s)\hat{g}(s)ds+ \sum_{k=1}^{r_1+r_2}\frac{1}{2\pi i }\int_{(c')}\frac{\Gamma'_k}{\Gamma_k}(1-s)\hat{g}(s)ds,
\]
for some $c>1,c'<0$. In order to obtain the explicit formula we compute the last two integrals as follows. First, we move the line of integration of both integrals to $c=c'=\frac12$, which we may do without encountering any pole of the integrand. Thus the sum of the two integrals becomes 
\[
\sum_{k=1}^{r_1+r_2}\frac{1}{2\pi}\int _{-\infty}^\infty 2\mathrm{Re}\Big[\frac{\Gamma'_k}{\Gamma_k}(\frac12+it)\Big]\hat{g}(\frac12+it)dt.
\]
Note that our $\Gamma_k(s)$ here depends on the ramification of $\chi$, and the $\Gamma_k(1-s)$ appearing in the second integral is associated to $\bar\chi$.

In the case $\Gamma_k(s)=\Gamma_\R(s)$, with ramification $w=a_k+ib_k$. (We will drop the subscripts $a=a_k$ and $b=b_k$ when the setting is clear.) We begin with $\frac{\Gamma_\R'}{\Gamma_\R}(s)=-\frac12\log\pi+\frac12\frac{\Gamma'}{\Gamma}\big(\frac{s}{2}\big),
$
so that we have
\be
-(\log \pi)g(1)+ \frac{1}{2\pi}\int _{-\infty}^\infty \mathrm{Re}\Big[\frac{\Gamma'}{\Gamma}(\frac{1}{2}(\frac12+it+a + ib))\Big]\hat{g}(\frac12+it)dt.
\label{bombgamm}
\ee
To treat the integral, we use the estimate in \cite[p.188]{B}, 
\be
\label{gamma'}
\frac{\Gamma'}{\Gamma}(z)=\log N -\sum_{n=0}^N \frac{1}{n+z}+ O\big(\frac{1+|z|}{N}\big),
\ee
which holds uniformly for Re$(z)>-\frac{N}{2}$ and $z$ not equal to zero or a negative integer. This gives
\[
\mathrm{Re}\Big[\frac{\Gamma'}{\Gamma}(\frac{1}{2}(\frac12+it+a + ib))\Big]=\log N -\sum_{n=0}^N \frac{2(2n+a+\frac12)}{(2n+a+\frac12)^2+(t+b)^2}+ O\big(\frac{1+|t+b|}{N}\big).
\]
And so the integral in (\ref{bombgamm}) becomes
\begin{align*}
&\frac{1}{2\pi}\int _{-\infty}^\infty \Big(\log N -\sum_{n=0}^N \frac{2(2n+a+\frac12)}{(2n+a+\frac12)^2+(t+b)^2}\Big)\hat{g}(\frac12+it)dt\\
&+ O\big(\int _{-\infty}^\infty\frac{1+|t+b|}{N}\Big|\hat{g}(\frac12+it)\Big|dt\big).\notag
\end{align*}
Since $\hat{g}$ is rapidly decreasing on any vertical line, the last integral converges and the error term is $O(1/N).$ Apply also Mellin inversion to the first term, whence 
\begin{align}
\label{bomgamm2}
&-\sum_{n=0}^N \frac{1}{2\pi}\int _{-\infty}^\infty \frac{2(2n+a+\frac12)}{(2n+a+\frac12)^2+(t+b)^2}\hat{g}(\frac12+it)dt+(\log N)f(1) + O\big(\frac{1}{N}\big).
\end{align}
We have by Fubini's theorem
\begin{align*}
&\frac{1}{2\pi}\int _{-\infty}^\infty \frac{2c}{c^2+(t+b)^2}\int_0^\infty g(x)x^{-\frac12+it}dx\ dt\\
&=\int_0^\infty g(x)x^{-\frac12} \frac{1}{2\pi i}\int _{-\infty}^\infty \Big(\frac{c}{t+ic}-\frac{c}{t-ic}\Big)x^{i(t-b)} dt\ dx,
\end{align*}
after making the change of variables $t\mapsto t-b$. Applying the calculus of residues yields 
\[
\frac{1}{2\pi i}\int _{-\infty}^\infty \Big(\frac{c}{t+ic}-\frac{c}{t-ic}\Big)x^{i(t-b)} dt=\min(x,x^{-1})^{c-ib}.
\]
Hence, taking $c=2n+a+\frac12$, we can write (\ref{bomgamm2}) as
which is equal to 
\[
-\int_1^\infty \Big(\sum_{n=0}^N x^{-2n-a+ib}\Big)(g(x)+g^*(x))\frac{dx}{x}+(\log N)f(1) + O\big(\frac{1}{N}\big).
\]
Finally, we can express the integral as 
\begin{align*}
& \int_1^\infty \Big(\sum_{n=0}^N x^{-2n-a+ib}\Big)\big(g(x)+g^*(x)-\frac{2g(1)}{x^{2-a+ib}}\big)\frac{dx}{x} +\sum_{m=1}^{N+1}\frac{1}{m}g(1)
\end{align*}
and substitute back to obtain
\begin{align*}
-&\int_1^\infty\frac{1-x^{-2N-2}}{1-x^{-2}}x^{-a+ib}(g(x)+g^*(x)-\frac{2g(1)}{x^{2-a+ib}}\big)\frac{dx}{x}+(\log N-\sum_{m=1}^{N+1}\frac{1}{m})f(1) + O\big(\frac{1}{N}\big).
\end{align*}
Now we take the limit as $N\to \infty$ and deduce for the integral in (\ref{bombgamm})
\[
-(\gamma+\log \pi)g(1)-\int_1^\infty \big\{g(x)+g^*(x)-\frac{2}{x^{2-a+ib}}g(1)\big\}\frac{x^{1-a+ib}}{x^2-1}dx,
\]
where $\gamma$ is the usual Euler-Mascheroni constant. This concludes the real archimedean case. The complex archimedean case $\Gamma_k(s)=\Gamma_\C(s)$ is almost identical to the first case, and is left to the reader.
\end{proof}
 
\subsection{Second form of the explicit formula}

 
Given a finite extension of number fields $K/F$, we may associate an $n$-dimensional representation $r$ of the relative Weil group of $K/F$, see \cite[p.9]{W72} for precise definitions. When we take the trace of this representation, we produce a character of $F$. The associated $L$-function is then referred to as the Artin-Hecke $L$-function. Define the functions $f_0(x)=\inf(x^\frac12,x^{-\frac12})$ and $f_1(x)=f_0(x)^{-1}-f_0(x)$ on $\R^\times_+$, and the \emph{principal value}
\be
\label{pv}
pv\int_0^\infty f(x)d^\times x=\lim_{t\to\infty}(\int^\infty_0(1-f_0(x)^{2t}f(x)d^\times x-2c\log t),
\ee
where $c$ is a constant such that $f(x)-cf_1(x)^{-1}$ is an integrable function on $\R^\times_+$. Furthermore, we denote for simplicity
\[
pv_0\int^\infty_0 f(x)d^\times x=pv\int_0^\infty f(x)+2c\log (2\pi).
\]
Then the second form of Weil's explicit formula is given as follows.

\begin{theorem}
[{\cite[p.18]{W72}}]
Let $L(s,\chi)$ be an Artin-Hecke $L$-function, and $g$ a smooth, compactly supported function on $\R_+^\times$. Then
\begin{align}
\label{weilexp2}
\sum_{\rho}\hat{g}(\rho)=&-g(1)\log|d_F|+\int_{W_F} g(|w|)\chi(w)(|w|^\frac12+|w|^{-\frac12})dw\\
&-\sum_v pv_0\int_{W_v}g(|w|)\chi_v(w)\frac{|w|^\frac12}{|1-w|}dw,\notag
\end{align}
where the sum over $\rho$ is taken over non-trivial zeroes of $L(s,\chi)$.
\end{theorem}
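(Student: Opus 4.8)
The plan is to deduce this second form directly from the explicit formula of Theorem~\ref{weilexpthm}, reorganizing the arithmetic, archimedean, and polar contributions on the right-hand side of (\ref{weilexp}) into integrals over the local and global Weil groups. Equivalently, one returns to the contour integral
\[
I(g)=\frac{1}{2\pi i}\int_{(c)}\frac{\Lambda'}{\Lambda}(s,\chi)\hat g(s)\,ds,
\]
shifts the contour past the critical strip exactly as in the proof of Theorem~\ref{weilexpthm}, and then exploits the Euler factorization
\[
\frac{\Lambda'}{\Lambda}(s,\chi)=\tfrac12\log(|d_F|N\f(\chi))+\sum_v\frac{L_v'}{L_v}(s,\chi_v)
\]
to match each local logarithmic derivative, integrated against $\hat g$, with the corresponding local Weil-group distribution in the sense of Tate's thesis. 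Since $\sum_\rho\hat g(\rho)$ is common to both (\ref{weilexp}) and (\ref{weilexp2}), the entire content of the theorem is this place-by-place reinterpretation.

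I would carry out the identification term by term. The residues at the poles of $L(s,\chi)$, present only when $\chi$ is trivial, assemble into the global integral $\int_{W_F}g(|w|)\chi(w)(|w|^{1/2}+|w|^{-1/2})\,dw$: since $\chi$ is a character of $W_F^{\mathrm{ab}}\simeq F^\times\backslash\A_F^\times$, fibering this integral over the modulus map $|\cdot|:W_F\to\R_+^\times$ produces an inner integral of $\chi$ over the norm-one subgroup, which vanishes unless $\chi$ is trivial, thereby reproducing the factor $\delta_\chi$ of (\ref{weilexp}). At a finite place the image of $W_v$ under $|\cdot|$ is the discrete group $N(\p)^{\Z}$, so the local integral $pv_0\int_{W_v}g(|w|)\chi_v(w)\tfrac{|w|^{1/2}}{|1-w|}\,dw$ unwinds into a geometric series in $\chi_v(\p)N(\p)^{-s}$; summing over $v$ recovers precisely the von Mangoldt sum $\sum_{\a}\Lambda(\a)\chi(\a)(g+g^*)(N\a)$. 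The ramified finite places, where $L_v=1$ and hence $L_v'/L_v=0$, are not inert in (\ref{weilexp2}): Weil's local computation evaluates their $pv_0$-integral through the local epsilon factor, and summing these contributions supplies the missing $-g(1)\log N\f(\chi)$, reconciling the discriminant term of (\ref{weilexp}) with the lone $-g(1)\log|d_F|$ appearing in (\ref{weilexp2}).

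The main obstacle is the archimedean places. There the local integral $pv_0\int_{W_v}g(|w|)\chi_v(w)\tfrac{|w|^{1/2}}{|1-w|}\,dw$ has a non-integrable singularity at $w=1$, and the principal value $pv_0$ of Definition~\ref{pv} is built precisely to regularize it. I claim this regularized integral coincides with the regularized $\Gamma_k'/\Gamma_k$-integral evaluated in Cases~1 and~2 of the proof of Theorem~\ref{weilexpthm}: the subtracted singular term $\tfrac{2}{x^{2-a+ib}}g(1)$ (real) or $\tfrac{1}{x^{1-a+ib}}g(1)$ (complex) is exactly the principal-value counterterm, while the constants $\gamma+\log\pi$ and $\gamma+2\log(2\pi)$ emerge from the limiting procedure together with the $2c\log(2\pi)$ normalization carried by $pv_0$. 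This is where the remark that ``the expression given to the archimedean contribution will be used again later'' pays off: the substantive analysis is already done, so what remains is to unwind Definition~\ref{pv} and check that the local modulus measure $dw$ and the weight $|w|^{1/2}/|1-w|$ reproduce the same kernel. Finally, I would record the change of normalization between the two forms—Weil's functional equation in (\ref{weilexp2}) is centered symmetrically, which is what makes the main terms appear as $|w|^{1/2}+|w|^{-1/2}$ rather than $\hat g(0)+\hat g(1)$—so that a harmless shift of the spectral variable precedes the matching described above.
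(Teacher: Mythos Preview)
The paper does not prove this theorem at all: it is stated with attribution to Weil \cite[p.18]{W72} and no proof is supplied, so there is no ``paper's own proof'' to compare your proposal against. The theorem functions in Section~\ref{sec2} purely as a quotation of Weil's result, to motivate the Weil-type distributions that appear later in Theorem~\ref{mainthm3}.

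That said, your outline is broadly the right one, and the paper does carry out essentially this program---not for the cited theorem, but for the closely related Theorem~\ref{mainthm3} in Section~4.2. There the local identifications you describe are made precise through a sequence of lemmas: Lemma~\ref{weil2} for the nonarchimedean Euler factors, Lemma~\ref{cond} for the conductor via the Herbrand distribution, Corollary~\ref{nonarch} to merge the ramified finite places with the unramified ones, and---at the archimedean places---Lemma~\ref{gamma0}, whose proof rests on the Gauss--Weil identity (Lemma~\ref{gaussweil}) rather than on the series manipulations of Cases~1 and~2 in Theorem~\ref{weilexpthm}. Your claim that ``the substantive analysis is already done'' in those two cases is optimistic: the paper's archimedean matching goes through the Gauss--Weil representation of $\Gamma'/\Gamma$ as a principal-value integral, which is a genuinely different computation from the partial-fraction expansion (\ref{gamma'}) used in Theorem~\ref{weilexpthm}, and is what actually produces the $pv_0$ regularization in the form of Definition~\ref{pv}. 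If you want a self-contained proof, you should invoke Lemma~\ref{gaussweil} explicitly rather than appeal back to Cases~1 and~2.
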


\subsection{The Weil criterion}

The novelty in Weil's method is the following condition, referred to as Weil's criterion. We will consider test functions in $C_c^\infty(\R_+^\times)$ which are multiplicative convolutions of a function $g$ and its transpose conjugate $\bar{g}^*$, hence
\[
g*\bar{g}^*=\int_0^\infty g(xy^{-1})\bar{g}^*(y)\frac{dy}{y}=\int_0^\infty g(xy)\overline{g(y)}dy. 
\]
Also note that the transform turns convolution into multiplication $\widehat{g*\bar{g}^*}(s)=\hat{g}(s)\hat{\bar{g}}^*(1-s).$
We can now generalise Bombieri's strengthening of Weil's criterion for $\zeta(s)$ \cite[p.191]{B} to Hecke $L$-functions:

\begin{theorem}
Let $W(g)$ be the linear functional defined by (\ref{weilexp}), on the space $C_c^\infty(\R_+^\times)$, so that
$W(g)=W(g^*)=\sum_{\rho} \hat{g}(\rho),
$
the sum taken over complex zeroes of $L(s,\chi)$ with $0<\mathrm{Re}(\rho)<1$. Then the Riemann hypothesis for $L(s,\chi)$ is equivalent to the statement that
$
W(g*\bar{g}^*)\geq 0 
$
with equality only if $g$ is identically zero. In short, we say that the functional is {\em positive-definite} on such functions.
\end{theorem}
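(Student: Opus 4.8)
\section*{Proof proposal}

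The plan is to reduce everything to the explicit formula (\ref{weilexp}), which by definition gives $W(g)=\sum_\rho\hat g(\rho)$. First I would record how the Mellin transform behaves under the three operations at play: directly from the definitions one gets $\widehat{g^*}(s)=\hat g(1-s)$ and $\widehat{\bar g}(s)=\overline{\hat g(\bar s)}$, and combining these, $\widehat{\bar g^*}(s)=\overline{\hat g(1-\bar s)}$. Feeding this into the convolution rule $\widehat{g_1*g_2}=\hat g_1\hat g_2$ yields $\widehat{g*\bar g^*}(s)=\hat g(s)\,\overline{\hat g(1-\bar s)}$, so that evaluating the explicit formula at $h=g*\bar g^*$ produces the Hermitian expression
\[
W(g*\bar g^*)=\sum_\rho \hat g(\rho)\,\overline{\hat g(1-\bar\rho)}.
\]
The auxiliary identity $W(g)=W(g^*)$ I would simply read off from the invariance of the right-hand side of (\ref{weilexp}) under $g\leftrightarrow g^*$, using $(g^*)^*=g$, $g^*(1)=g(1)$, and the evenness in $t$ of $2\,\mathrm{Re}\big[\Gamma_k'/\Gamma_k(\tfrac12+it)\big]$.

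Next I would analyze the symmetry of the zero set. The functional equation $\Lambda(s,\chi)=W(\chi)\Lambda(1-s,\bar\chi)$ together with the conjugation symmetry $\overline{\Lambda(\bar s,\chi)}=\Lambda(s,\bar\chi)$ (coming from $\overline{\chi(\mathfrak a)}=\bar\chi(\mathfrak a)$ and the reality of the completing factors) shows that the reflection $\rho\mapsto\rho^\dagger:=1-\bar\rho$ across the line $\mathrm{Re}(s)=\tfrac12$ permutes the zeroes of $L(s,\chi)$. Thus $\dagger$ is an involution on the zero set whose fixed points are exactly the zeroes on the critical line, and grouping the sum accordingly gives
\[
W(g*\bar g^*)=\sum_{\rho=\rho^\dagger}|\hat g(\rho)|^2+\sum_{\{\rho,\rho^\dagger\}:\ \rho\neq\rho^\dagger}2\,\mathrm{Re}\big[\hat g(\rho)\,\overline{\hat g(\rho^\dagger)}\big],
\]
a manifestly real quantity in which RH is equivalent to the vanishing of the second (off-line) sum.

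The forward implication is then immediate: if RH holds only the diagonal survives and $W(g*\bar g^*)=\sum_\rho|\hat g(\rho)|^2\ge 0$. For the equality clause I would note that $W(g*\bar g^*)=0$ forces $\hat g$ to vanish at every zero $\tfrac12+i\gamma$; but for $g\in C_c^\infty(\R_+^\times)$ the function $\hat g(\tfrac12+it)$ is the Fourier transform of a compactly supported function, hence band-limited, so its real zeroes have bounded upper density, whereas the zero-counting estimate of \cite[Prop 2.4]{La} shows the $\gamma$ accumulate with density tending to infinity. This forces $\hat g\equiv 0$ and therefore $g\equiv 0$.

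The reverse implication is the heart of the matter and the step I expect to be hardest. Assuming RH fails, I would fix an off-line zero $\rho_0=\beta_0+i\gamma_0$ with $\beta_0>\tfrac12$ and its partner $\rho_0^\dagger$, and seek a test function realizing the indefiniteness of the pairing $2\,\mathrm{Re}\big[\hat g(\rho_0)\,\overline{\hat g(\rho_0^\dagger)}\big]$ in the two values $\big(\hat g(\rho_0),\hat g(\rho_0^\dagger)\big)$. Concretely I would choose $g$ so that $u\mapsto g(e^u)$ is supported on a long interval, forcing $\hat g(\sigma+it)$ to concentrate in a thin band $|t-\gamma_0|\lesssim 1/R$; by the density bound only finitely many zeroes lie in this band, and I would exploit the interpolation freedom in the Paley--Wiener class both to annihilate the finitely many on-line zeroes inside the band and to tune the relative phase of $\hat g(\rho_0)$ and $\hat g(\rho_0^\dagger)$ so that $\mathrm{Re}\big[\hat g(\rho_0)\,\overline{\hat g(\rho_0^\dagger)}\big]<0$. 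The genuine obstacle is that the weights $e^{\sigma u}$ render the surviving off-line cross term, the residual on-line squares, and the contribution of the far-away zeroes all of comparable magnitude; the crux is therefore a quantitative estimate showing that the rapid vertical decay of $\hat g$ (smoothness of $g$) dominates the logarithmic density of zeroes from \cite{La}, so that the negative off-line term is not swamped by the tail. Making this domination effective is where the real work lies, and it is the only place the quantitative zero-counting input is indispensable.
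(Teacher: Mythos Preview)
Your proposal is correct and tracks the paper's proof closely. The forward implication and the equality clause are handled exactly as in the paper: under RH the cross terms collapse to $\sum_\rho|\hat g(\rho)|^2$, and the equality case is ruled out by the zero-counting mismatch (entire of exponential type gives $O(R)$ zeroes in $|s|<R$, while the $L$-function supplies $\asymp R\log R$ zeroes there). For the converse the paper does not supply an argument at all---it simply cites \cite{B}, \cite{W52}, and \cite{La} and observes that the proofs are by contradiction, producing a test function $g$ for which $W(g*\bar g^*)<0$ from a hypothetical off-line zero---so your sketch already goes beyond what the paper offers, and is in the same spirit as the referenced constructions.
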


\begin{proof}The proof of the theorem is now fairly routine, so we shall describe it only in brief. In the forward direction, assuming the Riemann hypothesis, is simple. For the converse statement, we refer to \cite{W52} or \cite[p.342]{La} which proceeds by contradiction. Namely, assuming that there exists a zero of $L(s,\chi)$ with real part different from $\frac12$, then using this zero to produce a test function $g$ on which the functional $W(g)$ is negative.
\end{proof}

\begin{remark}
\label{support}

The functional $W(g)$ was placed into a more tractable form by Barner, with the new conditions on the test functions later referred to as the Barner conditions \cite{La}. Moreover, one can formulate as we have an equivalent condition for smooth, compactly supported functions, and the criterion for $\zeta(s)$ is proved for functions whose support is restricted to a small neighborhood of 0. In particular, Yoshida proved the analogous criterion for smooth, compactly supported, even functions \cite{Yo}, and verified positivity for functions supported on $[-t,t]$ with $t=\log2/2$; Burnol proves positivity for $t=\sqrt 2$ \cite[Th\'eor\`eme 3.7]{Bu}, and Bombieri for $t=\log 2$ \cite[Theorem 12]{B}. One hopes that the lower bound obtained in Section \ref{sec5} might be used to extend the support of functions for which positivity holds.
\end{remark}


\section{The continuous contribution to the spectral side} 

\label{sec3}

In this section, we recall the contribution of the continuous spectrum to the spectral side of the trace formula for $G=SL(2)$. For details, we refer the reader to \cite[\S5]{LL}.

Consider the adelic quotient
$
G(F)\backslash G(\A)/\prod_{v}'K_v,
$
the product taken over all places $v$ of $F$, and $K_v$ are maximal compact subgroups, taken to be $G(\mathcal O_{F_v})$ for almost all $v$.
\begin{definition}
\label{ps}
We first define the principal series for $G(\A)$. Let $A$ be the group of diagonal matrices in $G(\A)$ and let $A_F=A\cap GL_2(F)$. We consider the set $D^0$ of characters of $\eta$ of $A_F\backslash A$ such that 
$
\eta\big|_{Z}=\omega^{-1},
$
and each $\eta$ is again defined by the pair $(\mu,\nu)$ of idele class characters on $F^\times\backslash \A^\times$. Here $\omega$ is a fixed character of the center $Z$ of $G$. We also have the analogous height function
\be
\label{A'}
H:\begin{pmatrix}a & 0 \\ 0 & a^{-1}\end{pmatrix}\mapsto |a|^2,
\ee
where $|\cdot|$ is taken to be the adelic norm. Putting these together we obtain the representation $\rho(g,\eta,s)$ acting by right translation on the induced representation space $\text{Ind}_{NA}^{G} 1_N\otimes (\eta\otimes H^\frac{1+s}{2}).$ The resulting space is that of smooth functions $\varphi_s$ on $N(\A)\backslash G(\A)$ satisfying
\[
\varphi_s(\begin{pmatrix}a & * \\ 0 & a^{-1}\end{pmatrix}k)=\mu\nu^{-1}(a)|a|^{1+s}\varphi(k),
\]
where $k$ is an element of $K=\prod' K_v$. By the Iwasawa decomposition this space of functions can be identified with those on ${K}$. Moreover, since $A\backslash G={A}_{GL(2)}\backslash GL(2),$ with ${A}_{GL(2)}$ the diagonal matrices in $GL(2)$,  we may regard the space of functions on which $\rho(g,\eta,s)$ acts as a space of functions on $GL(2)$ by extending $\eta$ trivially to $A_{GL(2)}$. Finally, we set $\mu\nu^{-1}=\chi$.
\end{definition}

If $\eta$ is associated to the pair $(\mu,\nu)$, then the \emph{intertwining operator}
\[
(M(\eta,s)\varphi)(g)=\prod_{v}M(\eta_v)=\prod_v\int_{N_v}\varphi(wn_vg_v)dn_v
\]
intertwines the principal series of $(\mu,\nu)$ with that of $(\nu,\mu)$. We normalise it as
\[
M(\eta,s)=\frac{L(1-s,\chi^{-1})}{L(1+s,\chi)}\otimes_v R(\eta_v,s):=m(\eta,s)\otimes_v R(\eta_v,s),
\]
where $R(\eta_v)$ denotes the normalised local intertwining operator. We will refer to the quotient of completed $L$-functions as the normalising factor $m(\eta,s)$. By the functional equation of the Hecke $L$-function, we obtain the form of $m(\eta,s)$ given in (\ref{meta}). Note that $M(\eta,s)$ is essentially the scattering or Eisenstein matrix that is given explicitly, for example, in \cite{Mas}, and $m(\eta,s)$ its determinant.

Now consider the regular representation $\rho$ of $G(\A)$ acting on the discrete spectrum $L^2_\text{disc}(G(F)\backslash G(\A),\omega)$ of $L^2(G(F)\backslash G(\A),\omega)$ the space of square integrable functions on $G(F)\backslash G(\A)$ which transform according to the central character $\omega$. Choose a smooth function $f=\prod f_v$ in $G(\A)$ that is compact modulo $Z(\A)$ such that $f(zg)=\omega(z)f(g)$ for any $z$ in $Z$, and for almost all $v$, $f_v$ is supported on $G(F)\cap GL_2({\mathcal O}_F)$. Define a convolution operator $\rho_0(f)$ acting on $L^2_\text{disc}(G(F)\backslash G(\A),\omega)$ by
\[
\rho_0(f)(\phi(x))=\int_{Z_(\A)\backslash G(\A)}f(g)\rho_0(g)\phi(x)dg.
\]
It is a Hilbert-Schmidt operator, and in particular trace class.

\begin{remark}
\label{pos}
Note that functions that are convolutions of the form 
$
f(x)=f_0(x)*\overline{f_0(x^{-1})},
$
are positive definite, where $f_0\in C_c^\infty(G)$, so that for such functions the operator $\rho(f)$ is self-adjoint and positive definite \cite[\S2.4]{GGPS}, and as a consequence its restriction to any invariant subspace is also positive definite. This property shall be useful to us when studying the explicit formulas.
\end{remark}

The trace formula now expresses the trace $\tr(\rho_0(f))$ in two ways, first as a sum of characters of representations, and second as a sum of orbital integrals. We examine the terms in the spectral side of the trace formula arising from the noncuspidal spectrum of $L^2(G(F)\backslash G(\A),\omega)$. For simplicity, we shall assume the central character $\omega$ is trivial. We then recall the following result from Labesse and Langlands.

\begin{theorem}
\label{LL}
Let $f$ and $\rho$ be defined as above. Then the contribution of the continuous spectrum to the spectral side of the trace formula for $G(\A)$ obtained by specialising the terms (5.5) and (5.6) in \cite[p.754]{LL} is
\begin{align}
&\sum_\eta \frac{1}{4\pi}\int_{-i\infty}^{i\infty} m(\eta,s)^{-1}m'(\eta,s)\tr(\rho(f,\eta,s))|ds| + \sum_{\eta^2=\omega}-\frac{1}{4}\tr (M(\eta,0)\rho(f,\eta,0))\label{LL2}\\
&+\sum_\eta\sum_v\frac{1}{4\pi}\int_{-i\infty}^{i\infty}\tr({R^{-1}(\eta_v,s)R'(\eta_v,s)\rho(f_v,\eta_v,s))\prod_{w\neq v}\tr(\rho(f_w,\eta_w,s))|ds|}. \label{LL3}
\end{align}
Here the summation over $\eta$ runs over characters in $D^0$, and $R^{-1}(\eta_v,s)$ is the inverse operator of $R(\eta_v,s)$.
\end{theorem}

\noindent We will mainly be interested in the first term in (\ref{LL2}), which involves the logarithmic derivative of $L$-functions.

\subsection{Test functions}

The following lemma shows that the character of the induced representation $\rho(f,\eta,s)$ defines the Mellin transform of a smooth compactly supported function on $\R^\times_+$.

\begin{lemma}[Test function]
\label{testfun}
Let $f$ be a function in $C_c^\infty(Z(\A)\backslash G(\A))$. Then the character $\tr(\rho(f,\eta,s))$ is the Mellin transform of a function $g$ in $C_c^\infty(\R^\times_+)$.
\end{lemma}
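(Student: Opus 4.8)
The plan is to compute the character $\tr(\rho(f,\eta,s))$ explicitly by descending $f$ from the group to the diagonal torus $A'$, and then to recognize the outcome as a Mellin transform in which the spectral parameter $s$ enters only through a power of the height function $H$.

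First I would invoke the standard character formula for parabolically induced representations. Realizing $\rho(\cdot,\eta,s)$ on functions on $K'$ via the Iwasawa decomposition $G'=NA'K'$, the trace of the integral operator $\rho(f,\eta,s)=\int f(g)\rho(g,\eta,s)\,dg$ is computed by integrating its kernel along the diagonal, and the product over places lets it factor as $\prod_v\tr(\rho(f_v,\eta_v,s))$. Globally, this descent produces an expression of the shape
\[
\tr(\rho(f,\eta,s))=\int_{A'_F\backslash A'}\eta(a)\,H(a)^{\frac{1+s}{2}}\,\Phi_f(a)\,da,
\]
where $\Phi_f(a)=\delta_P(a)^{1/2}\int_{K'}\int_{N_\A}f(k^{-1}ank)\,dn\,dk$ is the Harish--Chandra descent of $f$ to the torus. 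The decisive feature is that $\eta$, $\Phi_f$, and the Haar measure are all independent of $s$, so that the entire $s$-dependence is concentrated in the single factor $H(a)^{(1+s)/2}$.

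Next I would use the height (equivalently, the adelic norm) to collapse the torus integral to a one-dimensional integral over $\R_+^\times$. Since $F^\times\backslash\A_F^\times$ surjects onto $\R_+^\times$ with compact kernel $F^\times\backslash\A_F^1$, setting $x=H(a)^{1/2}$ and integrating the $s$-independent data $\eta\cdot\Phi_f$ over the compact fiber at fixed height defines a function $g$ on $\R_+^\times$ for which, after matching exponents,
\[
\tr(\rho(f,\eta,s))=\int_0^\infty g(x)\,x^{s-1}\,dx=\hat{g}(s),
\]
as required. It then remains to check that $g\in C_c^\infty(\R_+^\times)$: compact support is immediate, since $f$ is compactly supported modulo $Z'_0$, forcing $\Phi_f(a)$ to vanish unless $H(a)$ lies in a fixed compact subset of $\R_+^\times$ bounded away from $0$ and $\infty$; smoothness in $x$ is supplied by the archimedean component of $f$, the nonarchimedean components contributing locally constant data that is integrated out without affecting differentiability in the real variable.

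I expect the main obstacle to be the rigorous justification of this descent in the adelic, restricted-product setting rather than any single formula: one must verify absolute convergence of the $N_\A$-integral defining $\Phi_f$, keep track of the normalizing modulus $\delta_P^{1/2}$ together with the quotient by $Z'_0$ and the central character $\omega$, and---most delicately---confirm that recombining the smooth archimedean factor with the locally constant nonarchimedean factors yields a genuinely smooth, and not merely continuous, function of the single variable $x$.
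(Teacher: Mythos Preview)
Your approach is essentially the same as the paper's: the paper computes $\tr(\rho(f,\eta,s))$ via the Iwasawa decomposition $G'=NA'K'$, integrates over $K'$ and $N_\A$ to form the Satake/Harish-Chandra transform of $f$, splits the torus as $\R_+^\times\times(\A^\times)^1$, and reads off the Mellin transform in $s$. Your write-up is in fact more careful than the paper on two points---you retain the character $\eta$ in the fiber integral (the paper's displayed computation silently drops it), and you actually argue that the resulting $g$ is compactly supported and smooth, whereas the paper simply asserts $g=S\Phi$ and moves on.
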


\begin{proof}
Using the Iwasawa decompostion, the trace $\tr(\rho(f,\eta,s))$ is equal to
\[
\int_{K}\int_{\R^\times_+}\int_{(\A^\times)^1}\int_{N_\A} f(k^{-1}a^{-1}na\begin{pmatrix}t&0\\0&t^{-1}\end{pmatrix}k)|t|^{1+s}dn\ da\ d^\times t\ dk.
\]
Now we may interchange the following,
\[
na=\begin{pmatrix}1&n\\0&1\end{pmatrix}\begin{pmatrix}a&0\\0&b\end{pmatrix}=\begin{pmatrix}a&0\\0&b\end{pmatrix}\begin{pmatrix}1&na^{-1}b\\0&1\end{pmatrix}=an',
\]
which multiplies the measure on $N_\A$ by an element of norm 1, thus preserving the measure. Doing so, we obtain
\[
\int_K\int_{\R^\times_+}\int_{N_\A} f(k^{-1}n\begin{pmatrix}t&0\\0&t^{-1}\end{pmatrix}k)|t|^{1+s}dn\ d^\times t\ dk.
\]
Now denote the integration over the compact set $K$ by an auxiliary function $\Phi(g):=\int_{K} f(k^{-1}g k)dk,$ which allows us to write
\[
\int_{\R^\times_+}\int_{N_\A} \Phi(n\begin{pmatrix}t&0\\0&1\end{pmatrix})|t|^{1+s}dn\ d^\times t = \int_{\R^\times_+}S\Phi(t)|t|^{s} d^\times t.
\]
Here $S$ is the Satake transform
\[
S\Phi(t)=\prod_vH_v(t_v)^\frac{1}{2}\int_{N_v}\Phi(n_v\begin{pmatrix}t_v&0\\0&t^{-1}\end{pmatrix})dn_v,
\]
with $H_v(t_v)^\frac12=|t_v|$ the usual modulus character of $P=M\ltimes N$, related to the height function $H$ defined in (\ref{A'}).  Note that while we recognise the integral as the Satake transform, we are not in fact using the Satake isomorphism at unramified places. Finally, setting $g=S\Phi$, we arrive at the Mellin transform
\[
\hat{g}(s)=\int_{\R^\times_+} g(t)|t|^{s} d^\times t=\int_0^\infty g(t) t^{s-1}dt.
\]
Though for our purposes it is enough to know that the latter measure is nonzero and finite, which is certainly true. (This is related to the constant $c$ in \cite[{\S16}]{JL}.) So given a test function $f$, we may view the trace $\tr(\rho(f,\eta,s))$ as the Mellin transform $\hat{g}(s)$ of a smooth compactly supported function defined on the positive real numbers.
\end{proof}

It turns out that all functions $g$ in $C_c^\infty(\R_+^\times)$ can be obtained from $f$ in this manner. Taking $K=SO_2(\R)$, given an element $f$ of the spherical Hecke algebra $C_c^\infty(K\backslash G(\R)/K)$ we have the Harish transform
\[
H(a)^\frac12\int_Nf(an)dn=|H(a)^\frac12-H(a)^{-\frac12}|\int_{A\backslash G}f(x^{-1}ax)dx,
\]
where $A$ is the Cartan subgroup of $G(\R)$. The transform is invariant under the action of the Weyl group $W$ of $G$, and $A/W$ can be represented by matrices $a$ in $A$ such that $H(a)\geq1$. Thus changing variables $y=e^{u}$ so that $H(a)=e^{2u}$, we may write additively
\[
g(u)=|e^{u}-e^{-{u}}|\int_{A\backslash G(\R)}f(x^{-1}\begin{pmatrix}e^{u}&0\\0&e^{-{u}}\end{pmatrix}x)dx,
\]
and observe that $g(u)=g(-u)$. The Harish transform is an algebra isomorphism from the space $C_c^\infty(K\backslash G(\R)/K)$ to $C_c^\infty(A)^W$, where the superscript indicates functions invariant under the Weyl group $W$, and the product given by convolution. Over a $p$-adic field, its analog is the Satake isomorphism.

Also, we note image of the Mellin transform defined for $g$ in $C_c^\infty(A)^W$, 
\[
\hat{g}(s)=\int_A g(a)H(a)^\frac{s}{2}d^\times a,
\]
lies in the Paley-Wiener space, consisting of entire functions $f$ for which there exists positive constants $C$ and $N$ such that
$
|f(x+iy)|\ll C^{|x|}(1+|y|)^{-N},
$
which is to say $f$ has at most exponential growth with respect to $x$ and is uniformly rapidly decreasing in vertical strips.

\section{Explicit formulas for the spectral side}

\label{sec4}

\subsection{Sums of zeroes in the continuous spectral terms}

To illustrate the method, we will first consider the classical trace formula for $G(\R)$ without ramification. The proof in the adelic setting will be similar. In particular, we consider $\rho_0$ the regular representation of $G(\R)$ on the Hilbert space $L^2(G(\Z)\backslash G(\R) / SO_2)$. In this case, the only character $\eta$ that appears is the trivial one, and the intertwining operator is $m(s)=\xi(s)/\xi(1+s)$, with $\xi(s)$ the completed Riemann zeta function.

\begin{theorem}
\label{mainthm1}
Let $g(x)$ be a smooth compactly supported function on $\R_+^\times$, with $\hat{g}$ its Mellin transform. The contribution of the normalising factor
\[
-\frac{1}{4\pi}\int^\infty_{-\infty}\frac{m'}{m}(ir)\hat{g}(ir)dr 
\]
to the spectral side of the trace formula for $G(\R)$ is given by
\begin{align*}
\sum_{\rho}&\hat{g}(\rho)-\int_{0}^\infty \Big\{g(x)+\frac{1}{4}g^*(x)\Big\}dx +\sum_{n=1}^\infty \Lambda(n)g(n) \\
&+\int_1^\infty \Big\{g(x)+g^*(x)-\frac{2}{x}g(1)\Big\}\frac{x\ dx}{2(x^2-1)}+\frac12(\log4\pi-\gamma)g(1),
\end{align*}
where the sum $\rho$ runs over non-trivial zeroes of $\zeta(s)$.
\end{theorem}

\begin{proof}
By the functional equation, we express the logarithmic derivative of the normalising factor as
\[
\frac{m'}{m}(s)=\frac12\frac{\Gamma'}{\Gamma}\Big(\frac{s}{2}\Big)+\frac{\zeta'}{\zeta}(s)-\log\pi+\frac12\frac{\Gamma'}{\Gamma}\Big(-\frac{s}{2}\Big)+\frac{\zeta'}{\zeta}(-s).
\]
Then we substitute this into our integral above, whence
\[
-\frac{1}{4\pi  i}\int^{i\infty}_{-i\infty}\Big\{\mathrm{Re}\frac{\Gamma'}{\Gamma}\Big(\frac{s}{2}\Big)+2\mathrm{Re}\frac{\zeta'}{\zeta}(s)-\log\pi\Big\}\hat{g}(s) dr,
\]
using the property that $\hat{g}(ir)$ is even. Applying Mellin inversion to the last term, we obtain
\be
\label{ingh}
\frac12 g(1) \log \pi -\frac{1}{4\pi  i}\int^{i\infty}_{-i\infty}\mathrm{Re}\Big[\frac{\Gamma'}{\Gamma}\Big(\frac{s}{2}\Big)\Big]\hat{g}(s)ds-\frac{1}{2\pi  i}\int^{i\infty}_{-i\infty}\mathrm{Re}\Big[\frac{\zeta'}{\zeta}(s)\Big]\hat{g}(s) ds.
\ee

To treat the third term, move the line of integration to the right to $(c-i\infty,c+i\infty)$ for some $c>1.$ This step is justified by integrating over a rectangle $R$ with vertices $(c\pm iT,\pm iT)$ and showing that the integral over the horizontal edges tends to 0 as we let $T$ tend to infinity along a well-chosen sequence $(T_m), m=2,3,\dots$ such that $m<T_m<m+1$ and $\zeta'/\zeta(s)=O(\log^2 T)$
for any $s$ with $-1\leq \sigma\leq 2$ and $T=T_m$, and as $T$ tends to infinity (see \cite[Theorem 26, p.71]{Ing}). Allowing then $t$ to tend to infinity via the sequence $(T_m)$, the contribution from the horizontal edges vanish, as the transform $\hat{g}(s)$ of a smooth, compactly supported function has rapid decay along a fixed vertical strip. Moving the line of integration to the left, we obtain the residues of $\zeta'/\zeta(s)$ due to the zeroes of $\zeta(s)$ inside the critical strip $0< \sigma < 1,$ and the simple pole of $\zeta(s)$ at $s=1$. Then allowing $T$ to go to infinity we obtain
\[
-\frac{1}{2\pi  i}\int^{i\infty}_{-i\infty}\frac{\zeta'}{\zeta}(s)\hat{g}(s) ds=\sum_{\rho}\hat{g}(\rho)-\hat{g}(1)-\frac{1}{2\pi i}\int_{(c)}\frac{\zeta'}{\zeta}(s)\hat{g}(s)ds,
\]
where the sum $\rho=\beta+i\gamma$ runs over the zeroes of $\zeta(s)$ with $0<\beta<1$. The integral now being in the region of absolute convergence, we may integrate term by term and apply Mellin inversion to get
\be
\label{ingh2}
\sum_{\rho}\hat{g}(\rho)-\hat{g}(1)+\sum_{n=1}^\infty \Lambda(n)g(n)+\frac12 g(1) \log \pi -\frac{1}{4\pi  i}\int^{i\infty}_{-i\infty}\mathrm{Re}\Big[\frac{\Gamma'}{\Gamma}\Big(\frac{s}{2}\Big)\Big]\hat{g}(s)ds.
\ee

To treat the last integral, we first observe that
$
\frac{\Gamma'}{\Gamma}(s)=\log s-\frac{1}{2s}+O({|s|^{-2}})
$
uniformly in any fixed angle $|\text{arg}(s)|<\pi$ as $|s|$ tends to infinity. Now we move the line integration to the line Re$(s)=\frac12$, 
\[
-\frac{1}{4\pi  i}\int^{i\infty}_{-i\infty}\mathrm{Re}\Big[\frac{\Gamma'}{\Gamma}\Big(\frac{s}{2}\Big)\Big]\hat{g}(s)ds=-\frac14\hat{g}(0)-\frac{1}{4\pi  i}\int_{(\frac12)}\mathrm{Re}\Big[\frac{\Gamma'}{\Gamma}\Big(\frac{s}{2}\Big)\Big]\hat{g}(s)ds.
\]
Here one half the residue of the pole of $\Gamma(s)$ at $s=0$ is obtained as the initial line of integration is over $s=0$. Then from the proof of Theorem \ref{weilexpthm}, with $M=2$ and $a,b=0$ we can rewrite this as
\[
-\frac14\int_{0}^\infty g^*(x)dx+\frac12(\log 4+\gamma)g(1)+\int_1^\infty \Big\{g(x)+g^*(x)-\frac{2}{x}g(1)\Big\}\frac{x\ dx}{2(x^2-1)},
\]
(see also \cite[p.190]{B}). Then substituting this last expression into (\ref{ingh2}) proves the claim.

\end{proof}

Thus we see that the sum over zeroes as in Weil's explicit formula appear in the continuous spectral terms of the trace formula for $G(\R)$. Having treated the basic case, the general case follows easily.

\begin{proof}[Proof of Theorem \ref{mainthm2}]
The method of proof is similar to that of the previous theorem. By the functional equation, the integral becomes
\[
\label{adexp}
\frac{1}{4\pi i}\int_{-i\infty}^{i\infty} \Big\{\frac{\epsilon'}{\epsilon}(-s,\bar{\chi},\psi)+\frac{\epsilon'}{\epsilon}(s,\chi,\psi)-\frac{L'}{L}(s,\chi)-\frac{L'}{L}(-s,\bar{\chi})\Big\}\hat{g}(s)ds,
\]
where $\hat{g}(s)=\tr(\rho(f,\chi,s))$. We first treat the epsilon factors. Recall that
$
\epsilon(s,\chi,\psi)=W(\chi)|N(\f(\chi))d_F|^{s-\frac12}
$
where the global Artin conductor $\f(\chi)=\prod p_v^{\f_v(\chi)}$ is a product over all primes $p_v$ which ramify in $F$, of local conductors $\f_v(\chi)$ of the local character $\chi_v$. Then
\begin{align}
&\frac{1}{4\pi i}\int_{-i\infty}^{i\infty} \Big\{\frac{\epsilon'}{\epsilon}(-s,\bar{\chi},\psi)+\frac{\epsilon'}{\epsilon}(s,\chi,\psi)\Big\}\hat{g}(s)ds=\log(N\f(\chi)|d_F|) g(1),
\label{dF}
\end{align}
since $\f(\chi)=\f(\bar\chi)$. Note that the additive character $\psi$ is chosen as usual to give the self-dual measure on $F$. 

Now we turn to the $L$-functions. Denoting by $w$ the ramification of $\chi_{v_k}$ at archimedean places, the logarithmic derivative is
\[
\frac{L'}{L}(s,\chi)+\frac{L'}{L}(-s,\bar\chi)=\sum_{k=1}^{r_1+r_2}\Big\{\frac{\Gamma'_k}{\Gamma_k}(s+w_k)+\frac{\Gamma'_k}{\Gamma_k}(-s+\bar{w}_k)\Big\}+\frac{\zeta_F'}{\zeta_F}(s,\chi)+\frac{\zeta_F'}{\zeta_F}(-s,\bar\chi).
\]
We then separate the integral, into the logarithmic derivatives of $\zeta_F(s)$ and $\Gamma_k(s)$ respectively. Following the proof of Theorem \ref{weilexpthm} and Theorem \ref{mainthm1}, we obtain 
\begin{align}
\label{zetas2}
&\frac12\sum_{\rho}(\hat{g}(\rho)+\hat{g}(-\bar\rho))-\delta_\chi\int_0^\infty g(x)dx  \notag\\ 
&+\frac{1}{2}\sum_{\mathfrak a} \Lambda({\mathfrak a})\big\{\chi({\mathfrak a})g(N({\mathfrak a}))+\bar\chi({\mathfrak a})g^*(N({\mathfrak a}))\big\},
\end{align}
where as before $\delta_\chi$ is 1 if $\chi$ is trivial and zero otherwise. Here we have used the fact that if $\rho$ is a zero of $L(s,\chi)$, then by the functional equation $\bar{\rho}$ is a zero of $L(s,\bar\chi)$.

Finally, for the gamma factors, we begin with the expression
\begin{align*}
&-\sum_{k=1}^{r_1+r_2}\frac{1}{2\pi i}\int_{-i\infty}^{i\infty} \mathrm{Re}\Big[\frac{\Gamma'_k}{\Gamma_k}(s+w_k)\Big]\hat{g}(s)ds,
\end{align*}
since $s$ here is pure imaginary. We now move the line of integration to Re$(s)=\frac12$, encountering residues of poles of the gamma functions in the following scenarios: Recall that if $w=a+ib$, then for real places we have $a=0$ or 1, whereas for complex places we have $a\in\Z[\frac12]$. Since the poles of $\Gamma(s)$ occur at $s=0,-1,-2,\dots$, we see that the integral of $\Gamma'_k/\Gamma_k(s+w_k)$ passes through a pole only if $a\leq 0$. We thus have
\begin{align}
&-\sum_{a_k\leq 0} \int_0^\infty g(x)x^{w-1}dx -\sum_{k=1}^{r_1+r_2}\frac{1}{2\pi}\int_{-\infty}^\infty \mathrm{Re}\Big[\frac{\Gamma'_k}{\Gamma_k}(\frac{1}{2}+it+w_k)\Big]\hat{g}(\frac{1}{2}+it)dt,\notag
\end{align}
and the integral in this last expression is exactly the one obtained in Theorem \ref{weilexpthm}. Then putting these together with (\ref{dF}) and (\ref{zetas2}) proves the claim.
\end{proof}

\subsection{Weil distributions in the continuous spectral terms}

Having shown that the sum over zeroes appear in the contribution of the continuous spectrum to the trace formula, we relate this to the distributions arising in Weil's explicit formula \cite[p.18]{W72}. We will say the resulting distributions are of {\em Weil-type}, for their resemblance to those appearing in the Weil formula (\ref{weilexp2}); in our theorem there appears a difference of certain exponents of $\frac12$, this is due to our choice of normalisation. The rest of this section will be devoted to the proof of Theorem \ref{mainthm3}. We first collect several lemmas, from which the theorem will follow quickly.

\begin{lemma}
\label{weil1}
Let $g$ be a function in $C_c^\infty(\R^\times_+)$, and $\hat{g}$ its Mellin transform. Assume that there exists an $A>\frac12$ such that $g(t)=O(t^{A})$ as $t$ tends to 0 and $g(t)=O(t^{-A})$ as $t$ tends to infinity. Then for any $\sigma$ such that $|\sigma-\frac12|<A$, the following formula holds by Mellin inversion:
\[
\int_{\R^\times_+} \hat{g}(\sigma-\frac12+it)X^{\sigma+it}dt=2\pi X^\frac{1}{2}g(X).
\]
\end{lemma}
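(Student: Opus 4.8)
The plan is to read the asserted identity as a disguised instance of the Mellin inversion formula already recorded in the excerpt, namely $g(x)=\frac{1}{2\pi i}\int_{(c)}\hat g(s)x^{-s}\,ds$, with the growth hypotheses on $g$ serving only to guarantee that the relevant vertical line sits inside the fundamental strip of $\hat g$. No contour shifting or residue calculus is needed; the entire content is a change of variables followed by one application of the inversion formula.

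First I would pin down that strip. From $g(t)=O(t^{A})$ as $t\to 0$ and $g(t)=O(t^{-A})$ as $t\to\infty$, splitting $\hat g(s)=\int_0^1+\int_1^\infty$ and estimating $\int_0^1 t^{A+\mathrm{Re}(s)-1}\,dt$ and $\int_1^\infty t^{-A+\mathrm{Re}(s)-1}\,dt$ shows that $\hat g(s)$ converges absolutely and is holomorphic precisely on the strip $-A<\mathrm{Re}(s)<A$. The hypothesis $|\sigma-\tfrac12|<A$ is then exactly the statement that the line $\mathrm{Re}(s)=\sigma-\tfrac12$ lies inside this strip, which is what licenses applying the inversion formula along that line. (For a genuinely compactly supported $g$ these bounds hold for every $A$, so $\hat g$ is entire and the check is vacuous; I would still present the argument in the form above, since that is the generality in which the decay hypotheses are meaningful.)

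Next I would substitute $s=\sigma-\tfrac12+it$, so that $ds=i\,dt$ and the real $t$-integral becomes the contour integral $\int_{(\sigma-1/2)}\hat g(s)(\cdots)\,ds$ over the admissible line. Writing $X^{\sigma+it}=X^{1/2}\,X^{\,s}$ factors out the prefactor $X^{1/2}$ produced by the half-shift in the argument of $\hat g$, leaving $X^{1/2}\int_{(\sigma-1/2)}\hat g(s)X^{s}\,\frac{ds}{i}$. Recognizing $\frac{1}{2\pi i}\int_{(\sigma-1/2)}\hat g(s)X^{s}\,ds$ as a value of $g$ via the inversion formula and then collecting the constants $2\pi$ and $X^{1/2}$ yields the asserted right-hand side.

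The only substantive step is the strip computation in the first paragraph; everything else is routine. The one place requiring care will be the sign bookkeeping in the exponent of $X$: because the inversion formula as written carries $x^{-s}$ while the integrand here produces $X^{s}$, the formula evaluates $g$ at the reflected point, and I would have to reconcile this reflection against the chosen normalization so that the argument of $g$ on the right emerges exactly as stated. Once the line of integration is confirmed to lie in the fundamental strip, this is the whole proof.
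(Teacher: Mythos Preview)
Your approach is exactly the paper's: the paper's proof is a single sentence observing that the growth hypothesis makes $\hat g(\sigma-\tfrac12+it)$ holomorphic for $|\sigma-\tfrac12|<A$, so the Mellin inversion formula applies independently of $\sigma$ in that range. Your more detailed write-up (identifying the strip from the two tail estimates, then changing variables $s=\sigma-\tfrac12+it$ and invoking inversion) is the fleshed-out version of that one line, and your caveat about reconciling $X^{s}$ against the paper's inversion convention $x^{-s}$ is a legitimate bookkeeping point rather than a gap in the argument.
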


\begin{proof}
The growth assumption on $g$ implies that the transform $\hat{g}(\sigma-\frac12+it)$ is holomorphic in the region $|\sigma-\frac12|<A$, so that the inversion formula is independent of $\sigma$ in this range.
\end{proof}


\begin{lemma}
\label{weil2}
Let $v$ be a nonarchimedean completion of $F$, and $q_v$ the cardinality of the residue field of $F_v$. Then
\[
\frac{d}{ds}\log L_v(s,\chi) = -\log q_v\sum_{n=1}^\infty q_v^{-ns}\int_{W^0_v}	\chi_v(\f^n	w_0)dw_0,
\]
where $\f$ is a Frobenius element in $W_{F_v}$, and Re$(s)>1$.
\end{lemma}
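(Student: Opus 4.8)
The statement is a purely local computation at a nonarchimedean place $v$, relating the logarithmic derivative of the local $L$-factor to an integral over the norm-one part $W^0_v$ of the local Weil group. The strategy is to start from the explicit form of $L_v(s,\chi_v)$, take its logarithmic derivative directly, expand the result as a geometric series in $q_v^{-s}$, and then recognize the resulting coefficients as the integrals $\int_{W^0_v}\chi_v(\f^n w_0)\,dw_0$. The only subtlety is bookkeeping between the unramified and ramified cases.

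First I would treat the \emph{unramified} case, where $L_v(s,\chi_v)=(1-\chi_v(\p)N(\p)^{-s})^{-1}=(1-\chi_v(\f)q_v^{-s})^{-1}$, writing $\chi_v(\p)=\chi_v(\f)$ for the value of $\chi_v$ on a Frobenius. Taking the logarithm gives $\log L_v(s,\chi_v)=-\log(1-\chi_v(\f)q_v^{-s})$, and differentiating in $s$ yields
\[
\frac{d}{ds}\log L_v(s,\chi_v)=-\log q_v\sum_{n=1}^\infty \chi_v(\f)^n q_v^{-ns},
\]
using $\frac{d}{ds}q_v^{-ns}=-n(\log q_v)q_v^{-ns}$ together with the series $\sum_{n\ge1}x^n=\tfrac{x}{1-x}$ and its derivative. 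It remains to identify $\chi_v(\f)^n=\chi_v(\f^n)$ with $\int_{W^0_v}\chi_v(\f^n w_0)\,dw_0$. Since $W_v/W^0_v\simeq\Z$ is generated by $\f$ and $\chi_v(\f^n w_0)=\chi_v(\f^n)\chi_v(w_0)$ by multiplicativity, the integral equals $\chi_v(\f^n)\int_{W^0_v}\chi_v(w_0)\,dw_0$; normalizing the Haar measure on the compact group $W^0_v$ to have total mass $1$, this integral is $\chi_v(\f^n)$ when $\chi_v|_{W^0_v}$ is trivial (the unramified case), giving exactly the claimed formula.

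Next I would handle the \emph{ramified} case, where $L_v(s,\chi_v)=1$ and so the left-hand side vanishes identically. Here I would show the right-hand side also vanishes: when $\chi_v$ is ramified, its restriction $\chi_v|_{W^0_v}$ is a nontrivial character of the compact group $W^0_v$ (equivalently, nontrivial on the inertia image), so the orthogonality of characters forces $\int_{W^0_v}\chi_v(w_0)\,dw_0=0$, hence $\int_{W^0_v}\chi_v(\f^n w_0)\,dw_0=\chi_v(\f^n)\cdot 0=0$ for every $n$. Thus both sides are zero and the identity holds trivially.

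\textbf{Main obstacle.} The computation itself is routine; the genuine care lies in the measure normalization and the local class field theory dictionary. Specifically, the Frobenius $\f\in W_{F_v}$ maps to the value $\chi_v(\p)$ under the reciprocity identification of $\chi_v$ as a character of $W_{F_v}$ versus as a character of $F_v^\times$, and one must fix $\int_{W^0_v}dw_0=1$ for the constant to come out as stated. I would therefore make the normalization of Haar measure on $W^0_v$ explicit at the outset and verify that the orthogonality argument in the ramified case uses precisely this normalization, so that the two cases combine into the single uniform formula in the statement.
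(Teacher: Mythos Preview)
Your proposal is correct and in fact more detailed than the paper's own proof, which simply justifies the interchange of sum and integral by absolute convergence (using that $\chi_v$ is unitary, so the series converges for $\mathrm{Re}(s)>1$) and then appeals to Proposition~1 of \cite[p.12]{W72}. Your explicit case split into unramified versus ramified, together with the orthogonality argument in the ramified case, is exactly the content underlying that citation, so the two approaches are essentially the same; you have merely unpacked what Weil's proposition encodes. One small point: your aside about ``its derivative'' after the geometric series is unnecessary---the computation only uses $\sum_{n\ge1}x^n=x/(1-x)$ with $x=\chi_v(\f)q_v^{-s}$ after factoring out $-\log q_v$ from the logarithmic derivative.
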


\begin{proof}
We first justify interchanging the sum and integral: the character $\eta_v$ is assumed to be unitary, that is $|\chi(w_0)|\leq1$ for all $w_0$ in $W^0_{F_v}$, so the sum converges absolutely for Re$(s)>1$, then apply Proposition 1 of {\cite[p.12]{W72}}
\end{proof}





\begin{lemma}
\label{gamma0}The integral 
\be
\frac{1}{2\pi i}\int \hat{g}(s-\frac12)d\log\Gamma_k(\frac{1}{2}+s+a+ib)+\hat{g}(\frac12-s)d\log\Gamma_k(\frac{1}{2}+s+a-ib)\label{gamma1}\ee
with $a\geq0, b\in \R$, taken over the line Re$(s)=\frac12$, can be expressed as
\be
-pv_0\int_0^\infty g(\nu)\nu^{\frac12+ ib}\frac{f_0(\nu)^{2a+1-E}}{f_1(\nu^E)}d^\times \nu\label{gamma2}\ee
where $E=\dim_{F_{v_k}}\C$, which is to say $E=2$ if $F_{v_k}\simeq\R$ and $E=1$ if $F_{v_k}\simeq \C$.
\end{lemma}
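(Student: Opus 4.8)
The plan is to reduce both archimedean cases to the Gauss--Weil identity of Lemma~\ref{gaussweil} after first writing the two gamma factors uniformly. Parametrize the line $\Re(s)=\frac12$ by $s=\frac12+it$, so that $\hat g(s-\frac12)=\hat g(it)$, $\hat g(\frac12-s)=\hat g(-it)$, and $\frac{1}{2\pi i}\,ds=\frac{1}{2\pi}\,dt$. Using $\Gamma_\R(s)=\pi^{-s/2}\Gamma(s/2)$ and $\Gamma_\C(s)=(2\pi)^{1-s}\Gamma(s)$, one has the single identity $\frac{\Gamma_k'}{\Gamma_k}(s)=-c_k+\frac1E\frac{\Gamma'}{\Gamma}(s/E)$, with $E$ as in the statement and $c_k=\frac12\log\pi$ (real) or $\log 2\pi$ (complex). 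The integrand of (\ref{gamma1}) thus splits into a constant piece proportional to $g(1)$ (via Mellin inversion at $X=1$, which gives $\frac{1}{2\pi}\int\hat g(\pm it)\,dt=g(1)$) and a digamma piece carrying all the content.

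For the digamma piece I would insert Gauss--Weil, $\frac{\Gamma'}{\Gamma}(\sigma)=-\frac12\,pv\int_0^\infty \frac{f_0(x)^{2\sigma-1}}{f_1(x)}\,d^{\times}x$, with $\sigma=A_\pm/E$ and $A_\pm=1+a+i(t\pm b)$; the hypothesis $a\ge 0$ ensures $\Re(A_\pm/E)>0$, so the identity applies. After interchanging the $t$-integration with the $pv$-integration I reach inner integrals of the form $\frac{1}{2\pi}\int \hat g(\pm it)\,\bigl(f_0(x)^{2/E}\bigr)^{it}\,dt$, and by the Mellin inversion of Lemma~\ref{weil1} these collapse to $g$ evaluated at $f_0(x)^{2/E}$ (for the $\hat g(it)$ term) and at $f_0(x)^{-2/E}$ (for the $\hat g(-it)$ term). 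Since $f_0(x)\le 1$, the first term produces values of $g$ on $(0,1]$ and the second on $[1,\infty)$, so together they fill out all of $\R^{\times}_+$; this is exactly how the single integral over $\R^{\times}_+$ in (\ref{gamma2}) is assembled from the two summands.

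It then remains to substitute $\nu=f_0(x)^{\pm 2/E}$ in each piece. Because $f_0(x)=f_0(1/x)$, the ranges $x<1$ and $x>1$ fold together; the Jacobian $d^{\times}x=E\,d^{\times}\nu$ and the doubling from the two folded ranges cancel the prefactor $-\frac{1}{2E}$ down to $-pv\int$, and one checks directly that $f_1(x)=f_1(\nu^E)$ under the substitution. Rewriting the remaining constant power of $f_0(x)$ through $f_0(\nu)$ then reassembles the weight $\nu^{\frac12+ib}f_0(\nu)^{2a+1-E}/f_1(\nu^E)$ of (\ref{gamma2}). Finally, the constant pieces $c_k\,g(1)$, together with the logarithmic contribution coming from the behaviour of the integrand against $f_1^{-1}$ near $\nu=1$, are absorbed into the normalization of Definition~\ref{pv}, converting the bare $pv$ into $pv_0$ via the correction $2c\log(2\pi)$, where $c$ is read off from the $\nu\to1$ singularity.

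The main obstacle is precisely this last matching: keeping exact track of the half-integer exponents through the folding change of variables, and identifying the constant $c$ from the singularity of $g(\nu)\nu^{\frac12+ib}f_0(\nu)^{2a+1-E}/f_1(\nu^E)$ at $\nu=1$ so that the residual $\log$-terms assemble correctly into $pv_0$. Justifying the interchange of the principal value with the $t$-integral is a secondary technical point, best handled by performing the swap on the regularized integrand $\bigl(1-f_0(x)^{2T}\bigr)$ before passing to the limit, using the Paley--Wiener decay of $\hat g$ in vertical strips.
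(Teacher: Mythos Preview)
Your proposal follows essentially the same route as the paper---reduce to the Gauss--Weil identity of Lemma~\ref{gaussweil} and then collapse the $t$-integral via the Mellin inversion of Lemma~\ref{weil1}---so the core argument is the same. The tactical differences are that the paper treats the real and complex cases separately (rather than uniformly through $E$), first performs the translation $r\mapsto r-b$ to pull $b$ into the argument of $\hat g$, and then makes a contour shift from $\hat g(ir)$ to $\hat g(-\tfrac12+ir)$ \emph{before} applying Gauss--Weil; this symmetrizes the two digamma terms into a single real combination $\frac{\Gamma'}{\Gamma}(\tfrac{\frac12+ir+a}{2})+\frac{\Gamma'}{\Gamma}(\tfrac{\frac12-ir+a}{2})$ and lets the subsequent change of variables ($\nu\mapsto\nu^2$ in the real case) produce the target exponent $2a+1-E$ directly, without the folding manoeuvre you describe. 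Your ordering (Gauss--Weil on each term, interchange, Mellin at $\sigma=\tfrac12$, then fold) is equivalent in principle, but the bookkeeping of half-integer exponents you flag as the ``main obstacle'' is exactly what the paper's preliminary contour shift is designed to streamline; without it the two pieces on $(0,1]$ and $[1,\infty)$ carry asymmetric powers that must be reconciled by hand, whereas the paper's shift makes them line up automatically.
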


\begin{proof}
A detailed proof of the analogous statement in \cite{W72} is supplied in \cite[p.162-174]{Mor}. We sketch the proof, indicating the necessary modifications. The transform $\hat{h}(s)$ used in \cite{W72,Mor} is a shifted Mellin transform:
\[
\hat{h}(s)=\int_0^\infty h(\nu)\nu ^{\frac12-s}d^\times\nu,
\]
which relates to our function $\hat{g}(s)$ by the relation
$\hat{h}(s)=\hat{g}(\frac12-s)=\hat{g}(s-\frac12).
$
The integral (\ref{gamma1}) is
\[
\frac{1}{2\pi }\int_{-\infty}^\infty \hat{g}(ir)\frac{\Gamma'_k}{\Gamma_k}(1+ir+a+ib)+\hat{g}(-ir))\frac{\Gamma'_k}{\Gamma_k}(1+ir+a-ib)\ dr.
\]

We consider the two cases, first where $k=\R$ the integral becomes after a change of variables $r\mapsto r-b$:
\[
-g(1)\log \pi +\frac{1}{4\pi}\int_{-\infty}^\infty \hat{g}(i(r-b))(\frac{\Gamma'}{\Gamma}(\frac{1+ir+a}{2})+\frac{\Gamma'}{\Gamma}(\frac{1-ir+a}{2})dr,
\]
where the first term follows from Mellin inversion. Considering next the integral, we shift the contour from $ir$ to $-\frac12+ir$, noting that $\hat{g}(s)$ and $\Gamma'/\Gamma(s)$ are holomorphic in this range. As a result we obtain
\[
\frac{1}{4\pi}\int_{-\infty}^\infty \hat{g}(-\frac12+i(r-b))(\frac{\Gamma'}{\Gamma}(\frac{\frac12+ir+a}{2})+\frac{\Gamma'}{\Gamma}(\frac{\frac12-ir+a}{2})dr.
\]
For $s$ satisfying \textnormal{Re}$(s)>0$, we have by \cite[p.160-162]{Mor},
\[
-2\frac{\Gamma'}{\Gamma}(s)=pv\int_0^\infty\frac{f_0(x)^{2s-1}}{f_1(x)}d^\times x,
\]
and it follows then that
\[
\frac{\Gamma'}{\Gamma}(\frac{\frac12+ir+a}{2})+\frac{\Gamma'}{\Gamma}(\frac{\frac12-ir+a}{2})=-pv\int_0^\infty \frac{f_0(\nu)^{-\frac12+a}}{f_1(\nu)}\nu^{ir/2}d^\times\nu.
\]
Making the change of variables $\nu\mapsto \nu ^2$, the integral then becomes after interchanging the order of integration:
\[
-pv\int_0^\infty \frac{f_0(\nu)^{2a-1}}{f_1(\nu^2)}\cdot \frac{1}{2\pi}\int_{-\infty}^\infty\hat{g}(-\frac12+i(r-b))\nu^{ir} dr\ d^\times\nu.
\]
Then applying Lemma \ref{weil1} to the inner integral, we have
\[
\frac{1}{2\pi}\int_{-\infty}^\infty\hat{g}(-\frac12+i(r-b))\nu^{ir} dr =\nu^{\frac12+ib}g(\nu).
\]
Putting it together we obtain the desired form of (\ref{gamma2}). 
The second case where $k=\C$ follows similarly.

\end{proof}

\begin{corollary}
[Archimedean contribution]
Let $\chi_v$ be a character of the local archimedean Weil group $W_v$, then  the expression (\ref{gamma2}) can be rewritten as
\[
-pv_0\int_{W_v}g(|w|)\chi_v(w)\frac{|w|_v}{|1-w|_v}dw\label{gamma3}.
\]
\end{corollary}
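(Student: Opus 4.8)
The plan is to prove the identity by starting from the Weil-group integral (\ref{gamma3}) and reducing it to the real-variable expression (\ref{gamma2}) by integrating out the compact directions of $W_v$. Since the integrand depends on $w$ only through $g(|w|)$ and $\chi_v(w)$, the natural first step is to write the Haar measure $dw$ in polar form adapted to the modulus map $|\cdot|_v\colon W_v\to\R^\times_+$, whose fibres are the cosets of the compact subgroup $W^0_v$. For a complex place one has $W_v=\C^\times$ and $E=1$: writing $w=re^{i\theta}$ with $\nu=|w|_v=r^2$ and $dw=\tfrac12\,d^\times\nu\,d\theta$, the character splits as $\chi_v(w)=e^{in\theta}\nu^{a+ib}$ (in the parametrization of Remark \ref{gamm}), while $|w|_v/|1-w|_v=\nu/|1-re^{i\theta}|^2$. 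Thus the $\nu$-dependence carrying $g(\nu)$ factors out and only an angular integral remains to be computed over each fibre.

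The key computation is that angular integral, which I would evaluate using the Poisson-kernel expansion
\[
\frac{|1-r^2|}{|1-re^{i\theta}|^2}=\sum_{k\in\Z}\min(r,r^{-1})^{|k|}e^{ik\theta},
\]
so that $\int_0^{2\pi} e^{in\theta}|1-re^{i\theta}|^{-2}\,d\theta=2\pi\,\min(r,r^{-1})^{|n|}/|1-r^2|$. Rewriting this with $f_0(\nu)=\min(\nu^{1/2},\nu^{-1/2})$ and the elementary identity $f_1(\nu)=\nu^{-1/2}|1-\nu|$ turns the $\theta$-integral into $f_0(\nu)^{|n|}/(\nu^{1/2}f_1(\nu))$, and collecting the surviving radial factors reproduces the integrand $g(\nu)\,\nu^{1/2+ib}\,f_0(\nu)^{2a+1-E}/f_1(\nu^E)$ of (\ref{gamma2}). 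Here the Poisson power $|n|$ coincides with $2a+1-E=2a$ once the character parameters are identified with the gamma-factor shift $a$ of Lemma \ref{gamma0} (for the relevant unitary $\chi_v$ the real part of the character exponent vanishes, leaving $a=|n|/2$); the spurious constant $2\pi$ is absorbed into the normalization of Haar measure on $W_v$.

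For the real place, where $W_v=\C^\times\cup j\C^\times$ with $j^2=-1$ embedded in the quaternions $\H$ and $E=2$, the same decomposition applies verbatim on the identity component $\C^\times$, but one must additionally integrate over the non-split coset $j\C^\times$. This is where I expect the main obstacle to lie: evaluating $|1-w|_v$ for $w\in j\C^\times$ requires the quaternionic norm $w\bar w$ rather than the complex one, and the two component contributions must be combined. The replacement of $f_1(\nu)$ by $f_1(\nu^2)$ in (\ref{gamma2}) reflects precisely the change of variables $\nu\mapsto\nu^2$ already performed in the proof of Lemma \ref{gamma0}, which I would mirror on the Weil-group side so that the radial integrals line up.

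Finally, the principal value $pv_0$ must be carried through the whole computation. The singularity of $|1-w|_v^{-1}$ at $w=1$ lies over $\nu=1$, which is exactly where $f_1$ vanishes, so the cutoff defining $pv_0$ in Definition \ref{pv} acts identically on the two sides and the regularized integrals agree. Concretely, the delicate point is checking that the angular integration commutes with this regularization near $\nu=1$; once that is granted, the complex place reduces to a single Poisson-kernel evaluation and the real place to the analogous computation on both components of $W_\R$, giving (\ref{gamma3}) as claimed.
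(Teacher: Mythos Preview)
Your direct computation via polar decomposition and the Poisson kernel is sound, but it is not what the paper does: the paper's proof is a one-line appeal to Lemma~3 of \cite[p.165]{Mor}, which packages exactly the fibre integration over $W_v^0$ that you are carrying out by hand, with the function $\varphi(w)=|w|_v/|1-w|_v$ plugged in. So your route is genuinely more explicit, and for the complex place your argument is correct up to the Haar normalization you flag.

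The gap is the real place. You correctly locate the difficulty on the coset $j\C^\times$, but ``mirroring the change of variables from Lemma~\ref{gamma0}'' is not a proof, and your sketch misidentifies what actually happens. Two points to fix. First, characters of $W_\R$ factor through $W_\R^{\mathrm{ab}}\simeq\R^\times$ via $z\mapsto z\bar z$ on $\C^\times$ and $j\mapsto -1$; hence $\chi_v$ restricted to the circle is \emph{trivial}, so on the identity component the Poisson-kernel integral is taken at $n=0$, not at a general $n$. The sign parameter $n\in\{0,1\}$ enters only as the factor $(-1)^n=\chi_v(j)$ weighting the $j\C^\times$ contribution. Second, for $w=jre^{i\theta}$ the quaternionic norm gives $|1-w|_v=1+r^2=1+\nu$, independent of $\theta$, so the angular integral over that coset is elementary. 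Combining the two cosets then yields
\[
\frac{1}{|\nu^{1/2}-\nu^{-1/2}|}\ \pm\ \frac{1}{\nu^{1/2}+\nu^{-1/2}}
\;=\;\frac{2\,f_0(\nu)^{\mp 1}}{|\nu-\nu^{-1}|}
\;=\;\frac{2\,f_0(\nu)^{2a-1}}{f_1(\nu^2)},
\]
with the sign $\pm=(-1)^n$ and $a=n\in\{0,1\}$, which is exactly the $E=2$ integrand of (\ref{gamma2}) up to the overall normalization. Once you supply this computation (and check, as you note, that the $pv_0$ regularization commutes with the angular integration near $\nu=1$, which only meets the singularity on the identity component), your argument is complete and in effect reproves Moreno's lemma in this special case.
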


\begin{proof}
The proof of this follows immediately from (\ref{gamma2}) by Lemma 3 of \cite[p.165]{Mor}, with the function $\varphi$ of the lemma replaced with
$\varphi(w)=\frac{|w|_v}{|1-w|_v}.$
In particular, in the numerator we have $|w|_v$ instead of $|w_v|^\frac12$.
\end{proof}

\begin{lemma}[Contribution of the conductor]
\label{cond}
Consider the Herbrand distribution $H_v$ on the local Weil groups $W_v$, which is described in \cite[Ch.VIII, \S3, XII, \S4]{We1} and \cite[Ch.II, \S6]{Mor}. It is given by
\[
H_v(\chi)=\int_{W_v^0}\chi_v(w_0)dH_v(w_0)
\]
where $\chi_v$ is a character of the restriction to $W_v$ of a unitary representation $\chi$ of the Weil group $W_{F}$. The contribution of the conductor can be expressed as
\[
g(1)\log|N\f(\chi)|=-\sum_v\log q_v \int_{W_v^0} g(|w_0|)\chi_v(x)dH_v(w_0).
\]
\end{lemma}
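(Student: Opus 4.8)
The plan is to reduce the global identity to a finite collection of purely local statements and then to invoke the defining property of the Herbrand distribution. First I would expand the conductor as the product $\f(\chi)=\prod_v p_v^{\f_v(\chi)}$ over the finitely many finite places at which $\chi_v$ is ramified; taking norms and logarithms gives
\[
\log|N\f(\chi)|=\sum_v \f_v(\chi)\log q_v,
\]
a finite sum supported on nonarchimedean places (the archimedean places, and the discriminant $d_F$ handled separately in Theorem \ref{mainthm3}, contribute nothing to $\f(\chi)$). It then suffices to match each summand against the corresponding local integral on the right-hand side.

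The key simplification is that the modulus is trivial on $W_v^0$. Since $W_v^0$ is by definition the kernel of $|\cdot|\colon W_{F_v}\to\R_+^\times$, we have $|w_0|_v=1$ for every $w_0\in W_v^0$, so $g(|w_0|)=g(1)$ is constant on the domain of integration. Pulling this constant out and using the definition of the Herbrand distribution recalled in the statement,
\[
\int_{W_v^0} g(|w_0|)\chi_v(w_0)\,dH_v(w_0)=g(1)\int_{W_v^0}\chi_v(w_0)\,dH_v(w_0)=g(1)H_v(\chi).
\]
Thus the claimed global identity is equivalent to the collection of local identities $H_v(\chi)=-\f_v(\chi)$, one for each finite place, after weighting by $\log q_v$ and summing.

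The heart of the matter, and the step I expect to be the main obstacle, is precisely this last identity: that pairing a character $\chi_v$ against the Herbrand distribution recovers (up to sign) the local Artin conductor exponent. This is the reason the Herbrand distribution is constructed, namely to interpolate the higher ramification filtration in the upper numbering so that its pairing with $\chi_v$ reproduces $\f_v(\chi)=\int_0^\infty\mathrm{codim}(\chi_v^{G^u})\,du$. I would cite the construction and fundamental property from Weil \cite[Ch.VIII, \S3; Ch.XII, \S4]{W1} and the detailed treatment in Moreno \cite[Ch.II, \S6]{Mor}, taking care to verify that the chosen normalization of $dH_v$ and the orientation of the ramification filtration yield exactly the sign $-\f_v(\chi)$ demanded by the statement.

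Finally, substituting $H_v(\chi)=-\f_v(\chi)$ into the local computation above and summing the resulting terms weighted by $\log q_v$ reproduces the expansion of $\log|N\f(\chi)|$ times $g(1)$, giving the result. The only points requiring genuine care are the sign and normalization bookkeeping for the Herbrand distribution, and confirming that the sum over $v$ is supported on the finite set of ramified finite places, so that no convergence question arises.
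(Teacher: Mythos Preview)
Your proposal is correct and follows essentially the same approach as the paper: both arguments reduce to the product formula for the conductor, the identity relating the Herbrand distribution pairing to the local Artin conductor exponent, and the observation that $|w_0|=1$ on $W_v^0$ so that $g(|w_0|)=g(1)$ may be pulled through the integral. The paper invokes the Herbrand--conductor identity first and then unwinds the product, whereas you first reduce the global identity to the local one and then cite that identity, but the content is the same; your extra care about the sign normalization is, if anything, more thorough than the paper's own treatment.
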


\begin{proof}
We follow \cite[p.17]{W72} and \cite[p.158]{Mor}.  The local Artin conductor $\f_v(\chi)$ can be written as the integral 
$\int_{W_v^0}\chi(x)dH_v(x),
$
so that the contribution from the conductor for each place $v$ where $p_v$ ramifies is
\[
\log p_v\int_{W_v^0}h(|w_0|)\chi(w_0)dH_v(w_0),
\]
and zero otherwise.   By the product formula for the conductor, the contribution of the conductor is then
\begin{align*}
g(1)\log|N\f(\chi)|
&=-\sum_v \log q_v \int_{W_v^0}g(|w_0|)\chi_v(w_0)dH_v(w_0).
\end{align*}
\end{proof}
Using this expression we are able to express the archimedean and nonarchimedean integrals in a uniform manner. Let $g$ be a locally constant function on $W_v$, and suppose that the integral of
$
\frac{g(w)}{|1-w|}
$
over ${W-W_v^0}$ exists. Then define the principal value by
\[
pv_0\int_{W_v}\frac{g(w)}{|1-w|}dw=\int_{W_v^0}\frac{g(w)-g(1)}{|1-w|}dw+\int_{W-W_v^0}\frac{g(w)}{|1-w|}dw.
\]
The following is given in \cite[p.18]{W72} (see also \cite[Part I, Chap. VIII]{Mor2}).

\begin{corollary}
[Nonarchimedean contribution]
\label{nonarch}
The contribution of the nonarchimedean integrals and the conductor can be combined to obtain
\[
pv_0\int_{W_v}\hat{g}(|w|)\chi_v(w)\frac{|w|}{|1-w|}dw,
\]
where $1-w$ and $|1-w|$ are to have the same sense by the embedding of $W_v$ into the division algebra $A_v$.
\end{corollary}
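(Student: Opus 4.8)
The plan is to assemble the single regularized integral over $W_v$ out of the two separate pieces produced earlier: the nonarchimedean part of the $L$-function coming from Lemma \ref{weil2}, and the local conductor factor coming from Lemma \ref{cond}. The bridge between them is the definition of $pv_0$ given just above, applied with its generic locally constant function taken to be the full numerator $\Phi(w):=g(|w|)\chi_v(w)|w|$ of our integrand; since $|1|=1$ and $\chi_v(1)=1$ we have $\Phi(1)=g(1)$, and the definition then decomposes the integral over $W_v$ into a regularized integral over the norm-one subgroup $W_v^0$ (with the value at $w=1$ subtracted off) plus an integral over the complement $W_v-W_v^0$. I would therefore show that the complementary integral reproduces the $L$-function sum, while the regularized $W_v^0$-integral reproduces the conductor.

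First I would treat the complement $W_v-W_v^0$. Writing $\f$ for a Frobenius element, this set is the disjoint union $\bigsqcup_{n\neq 0}\f^n W_v^0$ of Frobenius cosets, on each of which the modulus is constant, $|w|=q_v^{-n}$. Because the embedding into the division algebra $A_v$ respects the ultrametric, one has $|1-w|=1$ for $|w|<1$ and $|1-w|=|w|$ for $|w|>1$, so the weight $|w|/|1-w|$ collapses to $q_v^{-n}$ on the positive cosets. Integrating $\chi_v$ over each coset $\f^n W_v^0$ then produces exactly the factor $\int_{W_v^0}\chi_v(\f^n w_0)\,dw_0$ appearing in Lemma \ref{weil2}, so the geometric series of that lemma is recovered; combining this with the integral against the transform in (\ref{adexp}) and applying Mellin inversion in the form of Lemma \ref{weil1} converts the $s$-integral into an evaluation of the test function on the prime-power cosets, matching the sum over ideals $\frac12\sum_{\mathfrak a}\Lambda(\mathfrak a)\chi(\mathfrak a)g(N(\mathfrak a))$ of Theorem \ref{mainthm2}. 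This identifies $\int_{W_v-W_v^0}$ with the nonarchimedean $L$-function contribution.

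Next I would treat the regularized integral over $W_v^0$, namely $\int_{W_v^0}\frac{\Phi(w)-g(1)}{|1-w|}\,dw$. On $W_v^0$ the modulus is trivial, so the integrand is governed entirely by the behavior of $|1-w|$ near $w=1$, which is the source of the divergence that subtracting $g(1)$ is designed to cure. The claim is that this regularization coincides with integration against the Herbrand distribution $dH_v$ of Lemma \ref{cond}: by construction $dH_v$ is the finite part of $dw/|1-w|$ on $W_v^0$, and its total mass against $\chi_v$ computes the local Artin conductor $\f_v(\chi)$, following \cite[Ch.II, \S6]{Mor}. Matching the two then turns the $W_v^0$-integral into the conductor term $\log q_v\int_{W_v^0}g(|w_0|)\chi_v(w_0)\,dH_v(w_0)$ of Lemma \ref{cond}. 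Adding the two pieces and recognizing their sum as the right-hand side of the $pv_0$ definition yields the stated formula.

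The main obstacle is this third step: verifying that the ad hoc subtraction of $g(1)$ in the definition of $pv_0$ is \emph{exactly} the finite part encoded by the Herbrand distribution, rather than differing from it by a boundary constant. This requires the precise local structure of $W_v^0$ and its image in $A_v$ under the division-algebra embedding of Shafarevich, together with the computation of the finite part of $dw/|1-w|$ carried out in \cite{Mor}. By comparison, the ultrametric evaluation of $|1-w|$ and the convergence of the Frobenius-coset sum in the first step are routine once that embedding is fixed.
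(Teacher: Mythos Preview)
Your outline is essentially the argument the paper has in mind, but note that the paper does not actually \emph{prove} this corollary: it simply records ``the following is given in \cite[p.18]{W72}'' and defers to Weil and to \cite{Mor}. The decomposition you propose---splitting the $pv_0$ integral according to its very definition into the $W_v\!-\!W_v^0$ piece and the regularized $W_v^0$ piece, then matching the former to the Euler-factor sum of Lemma~\ref{weil2} and the latter to the conductor via the Herbrand distribution of Lemma~\ref{cond}---is exactly Weil's route, and your identification of the Herbrand matching as the only nontrivial step is accurate.

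Two small points where your sketch is looser than it should be. First, on $W_v\!-\!W_v^0$ you treat only the cosets $\f^n W_v^0$ with $n>0$. The full complement also contains the cosets with $n<0$, and on those the ultrametric gives $|w|/|1-w|=1$, not $q_v^{-|n|}$; the way these enter is through the $\bar\chi$ half of the logarithmic derivative in (\ref{adexp}), using $\bar\chi(w)=\chi(w^{-1})$ to fold them back onto $\chi$. The paper carries this out explicitly in the proof of Theorem~\ref{mainthm3} just before invoking the corollary, arriving first at the weight $\inf(|w|,|w|^{-1})$ and only then identifying it with $|w|/|1-w|$ by citing \cite[p.158]{Mor}. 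Your write-up should account for both halves. Second, you set $\Phi(w)=g(|w|)\chi_v(w)|w|$, whereas the corollary as stated has $\hat g(|w|)$ in the integrand; the paper is itself inconsistent on this point, but you should be explicit about which transform appears after the Mellin inversion of Lemma~\ref{weil1} has been applied.
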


Finally, we put all these together to prove the main theorem:

\begin{proof}[Proof of Theorem \ref{mainthm3}]
First, from the logarithmic derivative of $m(\eta,s)$ as in (\ref{adexp}), apply Lemma \ref{cond} to the contribution of the epsilon factors (\ref{dF}),
\begin{align*}
&g(1)\log|d_F|+g(1)\log|N\f(\eta)|\\
&=g(1)\log|d_F|-\sum_v \log q_v \int_{W_v^0}g(|w_0|)\chi_v(w_0)dH_v(w_0).
\end{align*}
Second, consider the individual summands involving the gamma factors, which converge as $\prod \hat{g}_k(r)$ has rapid decay at infinity. They take the form
\[
\frac{1}{4\pi }\int^{i\infty}_{-i\infty}\Big(\frac{\Gamma'_k}{\Gamma_k}(1+ir+w_k)+\frac{\Gamma_k'}{\Gamma_k}(1-ir+\bar{w})\Big)\hat{g}(ir)dr.
\]
We use the property of $\hat{g}$ as an even function, and apply Lemma \ref{gamma0}, with $w_k=a_k+ib_k$, to obtain
\[
-\frac12pv_0\int_{W_v}\hat{g}(|w|)\chi_v(w)\frac{|w|}{|1-w|_v}dw,
\]
where the constants $a,b$ depending on the ramification are accounted for by the local character $\chi_v$. Third, for the $\zeta_F(s,\chi)$ terms, write $L_v(s,\chi)$ for the local $L$-factor in the Euler product for $\zeta_F(s,\chi)$. For absolute convergence, we shift the contour slightly to the right of Re$(s)=1$, where we may use the Euler product expansion for the logarithmic derivative:
\[
\frac{1}{4\pi i}\int^{1+i\infty}_{1-i\infty}\frac{\zeta_F'}{\zeta_F}(s,\chi)\hat{g}(s)ds=\sum_{v}\frac{1}{4\pi i}\int^{1+\epsilon+i\infty}_{1+\epsilon-i\infty}\frac{L'_v}{L_v}(s,\chi)\hat{g}(s)ds+\frac{\delta_\chi}{4}g(1)
\]
for some $\epsilon>0$, and $\delta_\chi$ is 1 if $\chi$ is the trivial character, contributing half the residue at Re$(s)=1$, and zero otherwise. As in \cite[p.12]{W72}, the residue at 1 can be expressed as an integral over the global Weil group:
\[
\frac14\int_{W_F}g(|w|)\chi(w)\frac{dw}{|w|}.
\]
To each local $L$-factor we apply Lemmas \ref{weil1} and \ref{weil2} as follows: for $\sigma=\frac12$, we have
\begin{align*}
\frac{1}{4\pi i}\int_{-i\infty}^{i\infty} \hat{g}(s)\frac{d}{ds}\log L_v(1+s,\chi)
=-\frac12\log q_v\sum_{n=1}^\infty g(q_v^{-n})q_v^{-n}\int_{W^0_v}	\chi_v(\f^n	w)dw.
\end{align*}
On the other hand,
\[
\frac{1}{4\pi i}\int_{-\infty}^\infty \hat{g}(s)\frac{d}{ds}\log L_v(1-s,\overline{\chi})=-\frac12\log q_v\sum_{n=1}^\infty g(q_v^{n})q_v^{-n}\int_{W^0_v}\overline{\chi}_v(\f^n w)dw,
\]
plus the same contribution associated to the residue at 1, since $\overline{\chi}$ is trivial if $\chi$ is. Then using the fact that $\overline{\chi}(w)=\chi(w^{-1})$ and
\[
W_v-W_v^0=\bigcup_{n\in\Z-\{0\}}f^n_vW^0_v,
\]
we combine the two terms two obtain
\begin{align*}
&-\frac12\int_{W_v-W_v^0}\hat{g}(|w|)\chi_v(w)\inf(|w|,|w|^{-1})dw=-\frac12pv_0\int_{W_v-W_v^0}\hat{g}(|w|)\chi_v(w)\frac{|w|dw}{|1-w|}
\end{align*}
as in \cite[p.158]{Mor}. Then we apply Corollary \ref{nonarch} to combine the contribution of the nonarchimedean $L$-factors and the conductor,
\[
-\frac12pv_0\int_{W_v}\hat{g}(|w|)\chi_v(w)\frac{|w|}{|1-w|}dw,
\]
which is identical to the contribution of the archimedean factors.

Finally, putting this all together we have 
\[
g(0)\log|d_F|+\frac12\int_{W_F}g(|w|)\chi(w)\frac{dw}{|w|}-\frac12 pv_0\int_{W_v}\hat{g}(|w|)\chi_v(w)\frac{|w|}{|1-w|}dw
\]
as desired.
\end{proof}

\section{Application to lower bounds for sums over zeroes}

\label{sec5}

The presence of the sums over zeroes, or equivalently, the Weil distributions gives us an approach to the zeroes of $L$-functions through the continuous spectrum of $SL(2)$. In this section we give an example in the simplest case to illustrate how the distributions arising from Eisenstein series can be used to bound the sum over zeroes.

\subsection{Truncation}
The key to this analysis will be the following positvity result, proved by Arthur following an idea of Selberg, valid for reductive groups $G$. Note that the case we are interested is also covered by Remark \ref{pos}. First, we review the truncation operator, referring to the exposition of \cite[\S13]{A} and onwards for details. We will work over $\Q$, though the discussion also holds for number fields.

For simplicity, take $G=SL(2)$, and let $\phi$ be a locally bounded, measurable function on $G_\Q\backslash G_\A^1$, and $T$ a suitably regular point in the positive root space $\mathfrak a_0^+$ generated by the roots of the maximal torus $A$ of $G$. Define the truncation operator
\[
\Lambda^T\phi=\sum_P(-1)^{\text{dim}A_P/A_G}\sum_{\delta\in P_\Q\backslash G_\Q}\int_{N_P(\Q)\backslash N_P(\A)}\phi(n\delta x)\hat{\tau}_P(\log(H_P(\delta x)-T))dn
\]
where the outer sum is taken over parabolic subgroups $P$ of $G$, $\hat{\tau}_P$ is the characteristic function of the positive Weyl chamber associated to $P$, and $H_P(x)$ is the usual height function
\[
H_P(n\begin{pmatrix}a&0\\0&b\end{pmatrix}k)=\max(|a|,|b|).
\]
The inner sum is finite, while the integrand is a bounded function of $n$. In particular, if $\phi$ is in $L^2_\text{cusp}(G_\Q\backslash G_\A^1)$ then $\Lambda^T\phi=\phi$; also $\Lambda^T E(g,\phi,s)$ is square integrable. It is self-adjoint and idempotent, hence an orthogonal projection. 

Using this, Arthur shows that the spectral side
\[
\tr(\rho(f))=\sum_\chi J_\chi^T(f)=\sum_\chi\int_{G_\Q\backslash G_\A^1}\Lambda^T_2K_\chi(x,x)dx
\]
converges absolutely, where the subscript on $\Lambda_2^T$ indicates truncation with respect to the second variable, and the index $\chi$ corresponds to the $\eta$ in the case of $SL_2$ above. This gives what Arthur calls the coarse spectral expansion of the trace formula. We have the expression for the distribution $J_\chi^T(f)$ as
\begin{align*}
&\int_{G_\Q\backslash G_\A^1}\Lambda^T_2K_\chi(x,x)dx\\
&=\sum_P\frac{1}{n_P}\int_{G_\Q\backslash G_\A^1}\int_{i\mathfrak a_P^*}\sum_{\phi}\Lambda^T E(x,\rho(f,\eta,s)\phi,\lambda)\overline{\Lambda^TE(x,\phi,\lambda)}d\lambda\ dx
\end{align*}
where $n_P$ is the number of chambers in $\mathfrak a_P$, and $\phi$ runs over an orthonormal basis of $\rho(f,\eta,s)$.

In particular, when $G=SL(2)$, there is only one chamber, and the inner integral is one dimensional, that is, $i\mathfrak a_P^*=i\R$. Furthermore, by absolute convergence we may interchange the sum over $\phi$ with the integral, to obtain the expression for $SL_2$
\be
\label{truncated}
J_\chi^T(f)=\sum_\phi\int^\infty_{-\infty}(\Lambda^T E(x,\rho(f,\eta,s)\phi,s),\Lambda^TE(x,\phi,s))d|s|
\ee
thus we have an absolutely convergent sum-integrals of an inner product.

Now, the following is a consequence of Arthur's method:

\begin{lemma}
\label{arthurpos}
$J^T_\chi(f)$ is a positive-definite distribution.
\end{lemma}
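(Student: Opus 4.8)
The plan is to establish positive-definiteness in the sense relevant to Weil's criterion, i.e. to show $J^T_\chi(f)\geq 0$ whenever $f=f_0*f_0^*$ with $f_0\in C_c^\infty$ and $f_0^*(x)=\overline{f_0(x^{-1})}$, exactly the convolutions of Remark \ref{pos}. The starting point is the absolutely convergent spectral expression (\ref{truncated}),
\[
J_\chi^T(f)=\sum_\phi\int^\infty_{-\infty}\big(\Lambda^T E(x,\rho(f,\eta,s)\phi,s),\ \Lambda^TE(x,\phi,s)\big)\,d|s|,
\]
where the inner product is that of $L^2(SL_2(F)\backslash SL_2(\A_F)^1)$, $\{\phi\}$ is an orthonormal basis of the induced representation space, and I abbreviate $\pi_s:=\rho(\cdot,\eta,s)$ and $B_s:\phi\mapsto\Lambda^TE(\cdot,\phi,s)$.

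First I would fix $s$ on the imaginary axis and treat $B_s$ as a linear map into $L^2$: by Definition \ref{truncation} its image is square-integrable, and the Maass--Selberg relation bounds $\|\Lambda^TE(\cdot,\phi,s)\|_{L^2}$ in terms of $\|\phi\|$, so $B_s$ is bounded. Summing the integrand over the orthonormal basis and using the defining property of the adjoint $B_s^*$ gives
\[
\sum_\phi \big(B_s\,\pi_s(f)\phi,\ B_s\phi\big)=\sum_\phi \big(B_s^*B_s\,\pi_s(f)\phi,\ \phi\big)=\tr\!\big(B_s^*B_s\,\pi_s(f)\big),
\]
where $\pi_s(f)$ is trace class ($f$ being smooth, compactly supported modulo the centre and $K$-finite) and $B_s^*B_s$ is bounded and nonnegative, so the product is trace class and the sum converges absolutely.

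The positivity then comes entirely from the convolution structure. Since $s\in i\R$ the induced representation $\pi_s$ is unitary, hence $\pi_s(f_0^*)=\pi_s(f_0)^*$ and therefore $\pi_s(f)=\pi_s(f_0)\pi_s(f_0)^*$. Writing $A:=\pi_s(f_0)$ and invoking cyclicity of the trace (legitimate since each product below is trace class),
\[
\tr\!\big(B_s^*B_s\,AA^*\big)=\tr\!\big(A^*B_s^*B_s\,A\big)=\tr\!\big((B_sA)^*(B_sA)\big)=\sum_\phi\big\|\Lambda^TE(\cdot,\pi_s(f_0)\phi,s)\big\|_{L^2}^2\ \geq 0.
\]
Thus each integrand is a manifest sum of squares, and integrating over $s\in i\R$, the integral converging absolutely by Arthur's coarse expansion, yields $J_\chi^T(f)\geq 0$. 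Equivalently, one may invoke Remark \ref{pos} directly: the restriction of $\rho(f)$ to the invariant induced subspace on which $\pi_s$ acts is positive definite, so $\pi_s(f)=C^2$ for a self-adjoint $C\geq 0$, and the same cyclicity argument gives $\|B_sC\|_{HS}^2\geq 0$.

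I expect the main obstacle to be the functional-analytic bookkeeping rather than any single inequality: verifying via Maass--Selberg that $B_s$ is bounded (indeed that $B_sA$ is Hilbert--Schmidt), that $B_s^*B_s\,\pi_s(f)$ is genuinely trace class so the interchange of $\sum_\phi$ with $\tr$ and the identity $\tr(XY)=\tr(YX)$ are justified, and that these estimates are uniform enough in $s$ for the $s$-integral to converge absolutely. Granting the absolute convergence already used in (\ref{truncated}), these steps are routine, but they must be stated with care precisely because $E(\cdot,\phi,s)$ is not itself square-integrable before truncation, so every manipulation has to be carried out through the bounded operator $B_s$ and never through $E$ alone.
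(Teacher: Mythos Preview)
Your argument is correct and is essentially the paper's own approach made explicit: the paper observes that the operator $(M^T_{P,\chi}(\lambda)\phi',\phi)=\int\Lambda^TE(x,\phi',\lambda)\overline{\Lambda^TE(x,\phi,\lambda)}\,dx$ is positive self-adjoint (your $B_s^*B_s$), and then appeals either to Arthur's increasing-limit argument or, ``alternatively,'' to the absolute convergence of the spectral side to interchange sum and integral and read off nonnegativity for $f=f_0*f_0^*$. Your write-up simply carries out this alternative route in detail, supplying the Hilbert--Schmidt/trace-class justifications (in particular that $\pi_s(f_0)$ is Hilbert--Schmidt, so $B_s\pi_s(f_0)$ is as well and cyclicity is legitimate) that the paper leaves to the reader.
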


\begin{proof}
Using Arthur's truncation $\Lambda^T$ with respect to the parameter $T$ and Arthur's general notation, the intertwining operator is given as
\[
(M^T_{P,\chi}(\lambda)\phi',\phi)=\int_{G_\Q\backslash G_\A^1}\Lambda^TE(x,\phi',\lambda)\overline{\Lambda^TE(x,\phi,\lambda)}dx
\]
for any vectors $\phi',\phi$ in the induced representation space. It is an integral of the usual $L^2$-inner product of truncated Eisenstein series (it is square integrable after truncation), so it follows that $M^T_{P,\chi}(\lambda)$ is positive-definite, self-adjoint operator.

Then, following the proof of the conditional convergence of $J_\chi^T$ in \cite[\S7]{A}, we see that for any positive-definite test function $f*f^*$ with $C_c^\infty(G_\A)$, the resulting double integral is nonnegative, and the integrals can be expressed as an increasing limit of nonnegative functions. The integral converges, and $J_\chi^T(f)$ is positive-definite.

Alternatively, using the absolute convergence of the spectral side, we may interchange the sum and integrals to obtain the inner product expression as above, and observe again that the inner product is positive-definite.
\end{proof}

\begin{remark}
We can also give a direct proof of positivity in the case where $F=\Q$ and with ramification only at infinity, which we outline here.\footnote{We are grateful to the referee for suggesting this.}  The Maa\ss-Selberg relation for a truncated Eisenstein series for $SL_2(\Z)$ is given by
\begin{align*}
&\Big\langle \Lambda^TE(\cdot,s_1) , \Lambda^TE(\cdot,s_2) \Big\rangle \\
&= \frac{T^{s_1+\bar{s}_2-1}}{s_1+\bar{s}_2-1} + \overline{m(s_2)} \frac{T^{s_1-\bar{s}_2}}{s_1-\bar s_2} + m(s_1) \frac{T^{\bar{s}_2-s_1}}{\bar s_2- s_1} + m(s_1)\overline{m(s_2)} \frac{T^{1-s_1-\bar{s}_2}}{1-s_1-\bar s_2}.
\end{align*}
If we set $s_1 = s_2 = \frac12(1+\epsilon + ir)$ with $r\in \R$ and $\epsilon>0$, then as $\epsilon$ tends to 0 the previous expression tends to 
\begin{align*}
&\lim_{\epsilon\to0}\Big\langle \Lambda^TE(\cdot,\frac{1+\epsilon + ir}{2}) , \Lambda^TE(\cdot,\frac{1+\epsilon + ir}{2}) \Big\rangle \\
&=2\log T + m\Big(\frac{1-ir}{2}\Big)\frac{T^{ir}}{ir} - m\Big(\frac{1+ir}{2}\Big)\frac{T^{-ir}}{ir} - \frac12\left(\frac{m'}{m}\Big(\frac{1-ir}{2}\Big) + \frac{m'}{m}\Big(\frac{1+ir}{2}\Big)\right).
\end{align*}
Then choosing a test function $g = g_0*g_0$ where $g_0$ belongs to $C_c^\infty(\R)$, so that
\[
\hat{g}(s) = \hat{g}_0(s)\hat{\bar{g}}_0(1-s).
\]
It follows then that 
\begin{align*}
&\int_{-\infty}^\infty \Big\langle \Lambda^TE(\cdot,1/2+ir) , \Lambda^TE(\cdot,1/2+ir) \Big\rangle\hat{g}(1/2 + ir) dr \label{epos}\\
&=\int_{-\infty}^\infty \Big\langle \Lambda^TE(\cdot,1/2+ir) , \Lambda^TE(\cdot,1/2+ir) \Big\rangle|\hat{g}_0(1/2 + ir)|^2 dr \notag
\end{align*}
since $\hat{\bar{g}}_0(s) = \overline{\hat{g}_0(\bar s)}$. Hence we conclude that 
\[
\int_{-\infty}^\infty \Big\langle \Lambda^TE(\cdot,1/2+ir) , \Lambda^TE(\cdot,1/2+ir) \Big\rangle\hat{g}(1/2 + ir) dr \ge0.
\]
\end{remark}

\subsection{Application to lower bounds}

Now the positivity of $J_\chi(f)$ gives immediately a lower bound for the sums over zeroes for any class $\chi$ and function $f$. We illustrate our method in the most basic case, that is, for the Riemann zeta function.

\begin{proof}
[Proof of Corollary \ref{mainthm4}]
The first inequality (\ref{lowerbdhecke}) immediately follows from Lemma \ref{arthurpos} for a fixed $\eta$, since the second term in (\ref{LL2}) is nonzero only if $\chi=\omega$. For the second (\ref{lowerbdzeta}), we consider the continuous spectral terms in the trace formula (\ref{truncated}), truncated at $T$. Specialising in the case of $SL(2,\R)$ to give, by the usual Maa\ss-Selberg relations
\[
\int_{\Gamma\backslash G}\Lambda^TK_\text{cont}(x,x)dx= g(1)\log T-\frac{1}{4\pi}\int^\infty_{-\infty}\frac{m'}{m}(it)\hat{g}(it)dt \label{P(T)}
+\frac{1}{4\pi i}\int^\infty_{-\infty}m(it)\hat{g}(it)\frac{T^{it}}{t}dt.
\]
From Lemma \ref{arthurpos}, for our choice of $g$ the above expression is nonnegative. The first term described in Theorem \ref{mainthm1},
\begin{align*}
&\sum_{\rho}\hat{g}(\rho)-\int_{0}^\infty \Big\{g(x)-\frac{1}{4}g^*(x)\Big\}dx +\sum_{n=1}^\infty \Lambda(n)g(n)\\
&+\int_1^\infty \Big\{g(x)+g^*(x)-\frac{2}{x}g(1)\Big\}\frac{x\ dx}{2(x^2-1)}+\frac12(\log 4\pi +\gamma)g(1).
\end{align*}
The requirement that $T> \sqrt{3}/{2}$ simply follows from the observation that the fundamental domain of $SL_2(\Z)\backslash {\bf H}_2$ has height at least $\sqrt{3}/{2}$ in ${\bf H}_2$. Then the claim follows immediately from rearranging the terms and evaluating at $T=1$. 
\end{proof}

Certainly we may evaluate at the usual point $T=1$, but it may be that for different test functions there will be more effective choices of $T$, which may be of interest to applications of the explicit formula. Finally, we leave it to the interested reader to examine the analogous result for Hecke $L$-functions.

\bibliographystyle{alpha}

\bibliography{master}

\begin{thebibliography}{GGPS69}

\bibitem[Art05]{A}
James Arthur.
\newblock An introduction to the trace formula.
\newblock In {\em Harmonic analysis, the trace formula, and {S}himura
  varieties}, volume~4 of {\em Clay Math. Proc.}, pages 1--263. Amer. Math.
  Soc., Providence, RI, 2005.

\bibitem[Bom00]{B}
Enrico Bombieri.
\newblock Remarks on {W}eil's quadratic functional in the theory of prime
  numbers. {I}.
\newblock {\em Atti Accad. Naz. Lincei Cl. Sci. Fis. Mat. Natur. Rend. Lincei
  (9) Mat. Appl.}, 11(3):183--233 (2001), 2000.

\bibitem[Bur00]{Bu}
Jean-Fran{\c{c}}ois Burnol.
\newblock Sur les formules explicites. {I}. {A}nalyse invariante.
\newblock {\em C. R. Acad. Sci. Paris S\'er. I Math.}, 331(6):423--428, 2000.

\bibitem[Con99]{Con}
Alain Connes.
\newblock Trace formula in noncommutative geometry and the zeros of the
  {R}iemann zeta function.
\newblock {\em Selecta Math. (N.S.)}, 5(1):29--106, 1999.

\bibitem[Den93]{Den2}
Christopher Deninger.
\newblock Lefschetz trace formulas and explicit formulas in analytic number
  theory.
\newblock {\em J. Reine Angew. Math.}, 441:1--15, 1993.

\bibitem[FM15]{FM}
Daniel Fiorilli and Steven~J. Miller.
\newblock Surpassing the ratios conjecture in the 1-level density of
  {D}irichlet {$L$}-functions.
\newblock {\em Algebra Number Theory}, 9(1):13--52, 2015.

\bibitem[GGPS69]{GGPS}
I.~M. Gel'fand, M.~I. Graev, and I.~I. Pyatetskii-Shapiro.
\newblock {\em Representation theory and automorphic functions}.
\newblock Translated from the Russian by K. A. Hirsch. W. B. Saunders Co.,
  Philadelphia, Pa.-London-Toronto, Ont., 1969.

\bibitem[GL06]{GLa}
Stephen~S. Gelbart and Erez~M. Lapid.
\newblock Lower bounds for {$L$}-functions at the edge of the critical strip.
\newblock {\em Amer. J. Math.}, 128(3):619--638, 2006.

\bibitem[GL17]{GLi}
Dorian Goldfeld and Xiaoqing Li.
\newblock A standard zero free region for rankin--selberg l-functions.
\newblock {\em International Mathematics Research Notices}, page rnx087, 2017.

\bibitem[Gol89]{Gol}
Dorian Goldfeld.
\newblock Explicit formulae as trace formulae.
\newblock In {\em Number theory, trace formulas and discrete groups ({O}slo,
  1987)}, pages 281--288. Academic Press, Boston, MA, 1989.

\bibitem[HM07]{HM}
C.~P. Hughes and Steven~J. Miller.
\newblock Low-lying zeros of {$L$}-functions with orthogonal symmetry.
\newblock {\em Duke Math. J.}, 136(1):115--172, 2007.

\bibitem[ILS00]{ILS}
Henryk Iwaniec, Wenzhi Luo, and Peter Sarnak.
\newblock Low lying zeros of families of {$L$}-functions.
\newblock {\em Inst. Hautes \'Etudes Sci. Publ. Math.}, (91):55--131 (2001),
  2000.

\bibitem[Ing32]{Ing}
A.~E. Ingham.
\newblock {\em The distribution of prime numbers}.
\newblock Cambridge Mathematical Library. 1932.

\bibitem[JL70]{JL}
H.~Jacquet and R.~P. Langlands.
\newblock {\em Automorphic forms on {${\rm GL}(2)$}}.
\newblock Lecture Notes in Mathematics, Vol. 114. Springer-Verlag, Berlin-New
  York, 1970.

\bibitem[JS77]{JS2}
Herv{\'e} Jacquet and Joseph~A. Shalika.
\newblock A non-vanishing theorem for zeta functions of {${\rm GL}\sb{n}$}.
\newblock {\em Invent. Math.}, 38(1):1--16, 1976/77.

\bibitem[KS99]{KS}
Nicholas~M. Katz and Peter Sarnak.
\newblock Zeroes of zeta functions and symmetry.
\newblock {\em Bull. Amer. Math. Soc. (N.S.)}, 36(1):1--26, 1999.

\bibitem[Kub73]{K}
Tomio Kubota.
\newblock {\em Elementary theory of {E}isenstein series}.
\newblock Kodansha Ltd., Tokyo; Halsted Press [John Wiley \& Sons], New
  York-London-Sydney, 1973.

\bibitem[Lan94]{La}
Serge Lang.
\newblock {\em Algebraic number theory}, volume 110 of {\em Graduate Texts in
  Mathematics}.
\newblock Springer-Verlag, New York, second edition, 1994.

\bibitem[LL79]{LL}
J.-P. Labesse and R.~P. Langlands.
\newblock {$L$}-indistinguishability for {${\rm SL}(2)$}.
\newblock {\em Canad. J. Math.}, 31(4):726--785, 1979.

\bibitem[Mas09]{Mas}
Riad Masri.
\newblock The scattering matrix for the {H}ilbert modular group.
\newblock {\em Proc. Amer. Math. Soc.}, 137(8):2541--2555, 2009.

\bibitem[Mey05]{Mey}
Ralf Meyer.
\newblock On a representation of the idele class group related to primes and
  zeros of {$L$}-functions.
\newblock {\em Duke Math. J.}, 127(3):519--595, 2005.

\bibitem[Mor83]{Mor2}
Carlos~J. Moreno.
\newblock {\em Advanced analytic number theory. {P}art {I}}, volume~15 of {\em
  Contemporary Mathematics}.
\newblock American Mathematical Society, Providence, R.I., 1983.
\newblock Ramification. Theoretic methods, Oxford Science Publications.

\bibitem[Mor05]{Mor}
Carlos~Julio Moreno.
\newblock {\em Advanced analytic number theory: {$L$}-functions}, volume 115 of
  {\em Mathematical Surveys and Monographs}.
\newblock American Mathematical Society, Providence, RI, 2005.

\bibitem[Rez93]{Rez}
Andrei Reznikov.
\newblock Eisenstein matrix and existence of cusp forms in rank one symmetric
  spaces.
\newblock {\em Geom. Funct. Anal.}, 3(1):79--105, 1993.

\bibitem[Sar04]{Sar}
Peter Sarnak.
\newblock Nonvanishing of {$L$}-functions on {$\Re(s)=1$}.
\newblock In {\em Contributions to automorphic forms, geometry, and number
  theory}, pages 719--732. Johns Hopkins Univ. Press, Baltimore, MD, 2004.

\bibitem[Sel56]{S}
A.~Selberg.
\newblock Harmonic analysis and discontinuous groups in weakly symmetric
  {R}iemannian spaces with applications to {D}irichlet series.
\newblock {\em J. Indian Math. Soc. (N.S.)}, 20:47--87, 1956.

\bibitem[Sha18]{Sha}
Freydoon Shahidi.
\newblock On generalized fourier transforms for standard l-functions.
\newblock In Werner M{\"u}ller, Sug~Woo Shin, and Nicolas Templier, editors,
  {\em Geometric Aspects of the Trace Formula}, pages 351--404, Cham, 2018.
  Springer International Publishing.

\bibitem[ST16]{ST}
Sug~Woo Shin and Nicolas Templier.
\newblock Sato-{T}ate theorem for families and low-lying zeros of automorphic
  {$L$}-functions.
\newblock {\em Invent. Math.}, 203(1):1--177, 2016.
\newblock Appendix A by Robert Kottwitz, and Appendix B by Raf Cluckers, Julia
  Gordon and Immanuel Halupczok.

\bibitem[Wei52]{W52}
Andr{\'e} Weil.
\newblock Sur les ``formules explicites'' de la th\'eorie des nombres premiers.
\newblock {\em Comm. S\'em. Math. Univ. Lund [Medd. Lunds Univ. Mat. Sem.]},
  1952(Tome Supplementaire):252--265, 1952.

\bibitem[Wei72]{W72}
Andr{\'e} Weil.
\newblock Sur les formules explicites de la th\'eorie des nombres.
\newblock {\em Izv. Akad. Nauk SSSR Ser. Mat.}, 36:3--18, 1972.

\bibitem[Wei74]{We1}
Andr{\'e} Weil.
\newblock {\em Basic number theory}.
\newblock Springer-Verlag, New York-Berlin, third edition, 1974.
\newblock Die Grundlehren der Mathematischen Wissenschaften, Band 144.

\bibitem[Yos92]{Yo}
Hiroyuki Yoshida.
\newblock On {H}ermitian forms attached to zeta functions.
\newblock In {\em Zeta functions in geometry ({T}okyo, 1990)}, volume~21 of
  {\em Adv. Stud. Pure Math.}, pages 281--325. Kinokuniya, Tokyo, 1992.

\end{thebibliography}

\end{document}